\newsavebox{\mybox}
\newtheorem{lemma}{Lemma}[section]
\newtheorem{corollary}[lemma]{Corollary}
\newtheorem{proposition}[lemma]{Proposition}
\newtheorem{remark}[lemma]{Remark}
\newtheorem{definition}[lemma]{Definition}
\newtheorem{definitions}[lemma]{Definitions}
\newtheorem{example}[lemma]{Example}
\newtheorem{examples}[lemma]{Examples}
\newtheorem{Elemento V.9}[lemma]{Elemento V.9}
\newcommand{\dsum}{\sum}
\newcommand{\K}{{\mathbb{K}}}
\newcommand{\uloopr}[1]{\ar@'{@+{[0,0]+(-4,5)}@+{[0,0]+(0,10)}@+{[0,0] +(4,5)}}^{#1}}
\newcommand{\uloopd}[1]{\ar@'{@+{[0,0]+(5,4)}@+{[0,0]+(10,0)}@+{[0,0]+
(5,-4)}}^{#1}}
\newcommand{\dloopr}[1]{\ar@'{@+{[0,0]+(-4,-5)}@+{[0,0]+(0,-10)}@+{[0,
0]+(4,-5)}}_{#1}}
\newcommand{\dloopd}[1]{\ar@'{@+{[0,0]+(-5,4)}@+{[0,0]+(-10,0)}@+{[0,0
]+(-5,-4)}}_{#1}}
\newcommand{\luloop}[1]{\ar@'{@+{[0,0]+(-8,2)}@+{[0,0]+(-10,10)}@+{[0,
0]+(2,2)}}^{#1}}
\definecolor{turquoise2}{rgb}{0,0.898039,0.933333}
\definecolor{magenta}{rgb}{1,0,1}
\definecolor{olivedrab}{rgb}{0.419608,0.556863,0.137255}
\definecolor{purple2}{rgb}{0.568627,0.172549,0.933333}
\definecolor{amethyst}{rgb}{0.6, 0.4, 0.8}
\definecolor{ao(english)}{rgb}{0.0, 0.5, 0.0}
\definecolor{atomictangerine}{rgb}{1.0, 0.6, 0.4}
\definecolor{amber(sae/ece)}{rgb}{1.0, 0.49, 0.0}
\definecolor{alizarin}{rgb}{0.82, 0.1, 0.26}
\definecolor{auburn}{rgb}{0.43, 0.21, 0.1}
\definecolor{aqua}{rgb}{0.0, 1.0, 1.0}
\begin{document}

\subjclass[2010]{Primary 17A60, 17D92} \keywords{Genetic algebra, evolution algebra, simple algebra, perfect algebra, basic ideal, irreducible algebra}

\author[Yolanda Cabrera Casado]{Yolanda Cabrera Casado}
\address{Yolanda Cabrera {Casado}:  Departamento de Matem\'atica Aplicada, Universidad de M\'alaga, Campus de Teatinos s/n. 29071 M\'alaga. Spain.}
\email{yolandacc@uma.es}

\author[M\"uge Kanuni]{M\"uge Kanuni}
\address{M\"uge Kanuni: Department of Mathematics. D\"uzce University, Konuralp 81620 D\"uzce, Turkey}
\email{mugekanuni@duzce.edu.tr}

\author[Mercedes Siles ]{Mercedes Siles Molina}
\address{Mercedes Siles Molina: Departamento de \'Algebra Geometr\'{\i}a y Topolog\'{\i}a, Universidad de M\'alaga, Campus de Teatinos s/n. 29071 M\'alaga.   Spain.}
\email{msilesm@uma.es}

\thanks{
The first and the third authors are supported by the Junta de Andaluc\'{\i}a and Fondos FEDER, jointly, through projects  FQM-336 and FQM-7156.
They are also supported by the Spanish Ministerio de Econom\'ia y Competitividad and Fondos FEDER, jointly, through project MTM2016-76327-C3-1-P.
\newline
This research took place while the second author was visiting the Universidad de M\'alaga. She thanks her coauthors for their hospitality.}

\title[Classification of four dimensional perfect non-simple evolution algebras]{Classification of four dimensional perfect non-simple evolution algebras}

\begin{abstract}
We classify  the four dimensional perfect non-simple evolution algebras over a field having characteristic different from 2 and in which there are roots of orders 2, 3 and 7.
\end{abstract}
\maketitle

\section{Introduction and preliminary results}

The aim of this paper is to classify the four dimensional perfect non-simple evolution algebras (an algebra $A$ is said to be perfect if $A^2=A$) under some mild conditions (the base field has characteristic different from 2 and there are roots of orders 2, 3 and 7).

Evolution algebras were introduced by Tian in \cite{Tian}, a pioneering monograph where many connections of these algebras with other mathematical fields (such as graph theory, stochastic processes, group theory, dynamical systems, mathematical physics, etc.) are established.

Since the appearance of this book in 2008, a flurry of papers have arisen. A systematic study of evolution algebras was started in  \cite{CSV1}, considering, not only finite dimensional evolution algebras as  in \cite{Tian}. The paper  \cite{CSV1} analyzes the notions of evolution subalgebras, ideals and non-degeneracy and describe those ideals generated by one element, characterizing the simple evolution algebras. The irreducibility of these algebras was also studied.  The classification of a particular type of algebras is always a major problem, and the case of evolution algebras is not an exception. In the paper \cite{evo9} the two dimensional evolution algebras are classified (on the complex field); over a general field (with mild restrictions), there are 10 mutually non-isomorphic families of evolution algebras (see \cite{CCY}). The three dimensional evolution algebras over a field having characteristic different from 2 and in which there are roots of orders 2, 3 and 7 were classified in \cite{CCY, CSV2}, listing 116 non-isomorphic families. These are precisely the restrictions we carry in. The difficulty in each dimension increases exponentially. There are several reasons: the computations become more complicated because of the number of cases and the properties considered in the previous cases are not sufficient or are not affordable. To visualize this difficulty, note that, roughly speaking, in the classification of three dimensional evolution algebras the matrices that can act as change of basis matrices (when the algebra is perfect) are the permutation matrices: only six, while in the four dimensional case, there are twenty four.

In this paper we start the classification of four dimensional evolution algebras. We will consider  the perfect non-simple.  The simple perfect four dimensional evolution algebras will appear in a forthcoming paper \cite{CKS2}.

The article is divided into five sections, being the first one devoted to the introduction and the preliminaries. In the second one we introduce the notion of basic ideal: an evolution ideal having a natural basis that can be extended to a natural basis of the whole algebra, i.e. having the extension property (see  \cite{CSV1}). These ideals play a fundamental role: on the one hand, the classification relays on the dimension of the maximal basic ideals; on the other hand, if $I$ is a maximal basic ideal of a perfect  evolution algebra $A$ having dimension $n$, then $A/I$ is a basic simple evolution algebra (that is, an algebra having no basic ideals)  whenever the dimension of $I$ is $i$, for $i=1, n-1$, or $n/2$, when $n$ even (Proposition \ref{BasicSimpleQuotient}).

We prove that there exist maximal $i$-basic ideals  (maximal basic ideals of dimension $i$)  for some $i <{\rm dim}(A)$ (Lemma \ref{maximal}), which are not necessarily unique (Example \ref{maximalNotUnique}) and characterize the basic simplicity for perfect evolution algebras, which does not depend on the natural basis.

Important key points for the classification are to write the structure matrix of a perfect non-simple evolution algebra in the form
$\begin{pmatrix}
W & U \\
0& Y
\end{pmatrix}$, where $W$ is the structure matrix of some maximal $i$-basic ideal $I$ and $Y$ is the structure matrix of the evolution algebra $A/I$ (Proposition \ref{simple-mat}), and to show that for  $i=1$, or $n/2$, when $n$ even, the possible permutations producing isomorphic evolution algebras having structure matrices of this form act as permutations in each block (Proposition \ref{OneDimensional}). A consequence is that, in these cases, the number {of} zeros in $W$, $U$ and $Y$ is invariant.

For the classification, {we first consider} if the evolution algebra is irreducible or not. When the algebra is in the first case, we distinguish depending on the dimension of a maximal basic ideal (which always exists) and can be one, two or three. These are the cases considered in Sections 3, 4 and 5, respectively.

In Sections 3 and 4 the philosophy is the same: we fix $W$ and classify depending on the number of zeros in $U$. The matrix $W$ in Section 3 is always the $1 \times 1$ matrix $(1)$, and the number of zeros in $U$ can be 2, 1 or 0. The matrix $W$ in Section 4 corresponds to the structure matrix of a 2-dimensional perfect evolution algebra. Looking at the classification of these evolution algebras (see \cite{CCY}) we distinguish five cases (4.1 {through} 4.5). In each one we classify depending on the number of zeros in $U$, which can be 3, 2, 1 or 0. Once we have {fixed} $W$ and $U$, we discuss which are the matrices $Y$ associated to simple three, respectively two dimensional, evolution algebras that can be considered: three dimensional in Section 3 and two dimensional in Section 4.

The last {section} deals  with the case in which $W$ is the structure matrix of a 3-basic ideal, say $I$. We will consider only the cases in which $A$ is an irreducible evolution algebra (this is explained in Figures \ref{AntesDeThreeWithTwo} and \ref{ThreeWithTwo}).
Subsections 5.1 and 5.2  distinguish when $I$ {does or does not have}
a 2-basic ideal, respectively. Finally, Subsection 5.1 is divided into two cases depending on $A$ having another 3-basic ideal $J$ such that $I \cap J$ is a 2-basic ideal of $I$ and $J$ (in this case {$A$ is said to satisfy} Condition (3,2,3)). We use the associated graph to determine all the algebras satisfying this condition.

In this paper we have not {analyzed} the possible isomorphisms among evolution algebras in the same family; the reason is not to enlarge {the paper}. This will be done in \cite{BCS}.

\medskip

From now on $A$ will be a four dimensional evolution algebra
having characteristic different from 2 and in which there are roots of orders 2, 3 and 7 whenever the structure matrix is invertible.

An \emph{evolution algebra} over a field $\mathbb K$ is a $\mathbb K$-algebra $A$ provided with
a basis $B=\{e_{i} \ \vert \ i\in \Lambda \}$ such that $e_{i}e_{j}=0$
whenever $i\neq j$ (where $\Lambda$ is a non-empty arbitrary set).
Such a basis $B$ is called a \emph{natural basis}.
Fixed a natural basis $B$ in $A,$ the scalars $\omega _{ki}\in \mathbb K$ such that
 $e_{i}^{2}:=e_ie_i=\dsum_{k\in \Lambda} \omega _{ki}e_{k}$ will be called the \emph{structure constants} of $A$\emph{\ relative to} $B$, and the matrix $M_B:= \left(w_{ki}\right)$ is said to be the \emph{structure matrix of} $A$ \emph{relative to} $B$. We will write $M_B(A)$ to emphasize the evolution algebra we refer to. Every evolution algebra is uniquely determined by its structure matrix.


\begin{definition}
\rm
An evolution algebra $A$ is said to be \emph{perfect} if $A^2=A$.
\end{definition}

Recall that an algebra $A$ is said to be \emph{simple} if it has no nonzero proper ideals. For evolution algebras, this is equivalent to not having nonzero proper
evolution ideals, i.e., ideals which are evolution algebras (see \cite{CSV1}).

Let $A$ be a perfect evolution algebra. Then, either $A$ is simple, or it is not. In the second case,  as it will be remarked in Proposition \ref{simple-mat}, there exists a natural basis $B$ of $A$ such that the structure matrix of $A$ relative to $B$ is
$$
M_B=\begin{pmatrix}
W_{m\times m} & U_{m\times (n-m)} \\
0_{(n-m)\times m} & Y_{(n-m)\times (n-m)}
\end{pmatrix}.
$$

\medskip


To reach the classification of four dimensional perfect non-simple evolution algebras, one of the key points will be the graph associated to the evolution algebra. We recall here some of the essential notions for a graph. A \emph{directed graph} is a 4-tuple $E=(E^0, E^1, r_E, s_E)$ consisting of two disjoint sets $E^0$, $E^1$ and two maps
$r_E, s_E: E^1 \to E^0$. The elements of $E^0$ are called the \emph{vertices} of $E$ and the elements of $E^1$ the \emph{edges} of $E$ while for
$f\in E^1$ the vertices $r_E(f)$ and $s_E(f)$ are called the \emph{range} and the \emph{source} of $f$, respectively. If there is no confusion with respect to the graph we are considering, we simply write $r(f)$ and $s(f)$. If $E^0$ and $E^1$ are finite we will
say  that $E$ is \emph{finite}.

A vertex which emits no edges is called a \emph{sink}. A vertex which does not receive any vertex is called a \emph{source}.
A \emph{path} $\mu$ in a graph $E$ is a finite sequence of edges $\mu=f_1\dots f_n$
such that $r(f_i)=s(f_{i+1})$ for $i=1,\dots,n-1$. In this case, $s(\mu):=s(f_1)$ and $r(\mu):=r(f_n)$ are the
\emph{source} and \emph{range} of $\mu$, respectively, and $n$ is the \emph{length} of $\mu$. This fact will be denoted by
$\vert \mu \vert = n$. We also say that
$\mu$ is \emph{a path from $s(f_1)$ to $r(f_n)$} and denote by $\mu^0$ the set of its vertices, i.e.,
$\mu^0:=\{s(f_1),r(f_1),\dots,r(f_n)\}$. On the other hand, by $\mu^1$ we denote the set of edges appearing in $\mu$, i.e., $\mu^1:=\{f_1,\dots, f_n\}$.
We view the elements of $E^{0}$ as paths of length $0$. The set of all paths of a graph $E$ is denoted by ${\rm Path}(E)$.
 Let $\mu = f_1 \dots f_n \in {\rm Path}(E)$.
  If  $n = \vert\mu\vert\geq 1$, and if
$v=s(\mu)=r(\mu)$, then $\mu$ is called a \emph{closed path based at $v$}.
 If
$\mu = f_1 \dots f_n$ is a closed path based at $v$ and $s(f_i)\neq s(f_j)$ for
every $i\neq j$, then $\mu$ is called a \emph{cycle based at} $v$ or simply a \emph{cycle}. A cycle of length $1$ will be said to be a \emph{loop}.

Given a finite graph $E$, the \emph{adjacency matrix} is the matrix $Ad_E=(a_{ij}) \in {\mathbb Z}^{(E^0\times E^0)}$
given by $a_{ij} = \vert\{\text{edges from }i\text{ to }j\}\vert$.

A graph $E$ is said to satisfy  \emph{Condition} (Sing) if among two vertices of $E^0$ there is at most one edge. There are different ways in which  we can associate a graph to an evolution algebra, but for our classification purposes, it will be enough for us to consider only graphs satisfying Condition (Sing), that is, the graph we will consider  will satisfy that $Ad_E=(a_{ij})$ has all its  entries in $\{0, 1\}$. A graph $E$ will be called \emph{cyclic} if there exists a cycle $\mu$ such that $\mu^0=E^0$. The graph $E$ is said to be \emph{strongly connected} if given two different vertices $u, v$, there exists a path $\mu$ such that $s(\mu)=u$ and $r(\mu)=v$.

Given a natural basis $ B=\{e_i\ \vert \ i\in \Lambda\}$ of an evolution algebra $A$ and given its structure matrix $M_B=(\omega_{ji})\in  {\rm M}_\Lambda(\mathbb K)$, consider the matrix
$P^t=(p_{ji})\in  {\rm M}_\Lambda(\mathbb K)$ such that $p_{ji}=0$ if $\omega_{ji}=0$ and $p_{ji}=1$ if $\omega_{ji}\neq 0$.
The \emph{graph associated to the evolution algebra} $A$ (relative to the basis $B$), denoted by $E_A^B$ (or simply by $E$ if the algebra $A$ and the basis $B$ are understood) is the graph whose adjacency matrix is $P= (p_{ij})$ (see, for example, \cite{CSV1}).

\begin{remark}\label{milk}
\rm
In general, the graph associated to an evolution algebra $A$ depends on the selected natural basis  and non-isomorphic graphs can give rise to the same evolution algebra (see \cite[Example 2.34]{CSV1}). However, when the algebra $A$ is finite dimensional and satisfies $A=A^2$, as in the case we are considering, the graph is uniquely determined, as proved in \cite[Corollary 4.5]{EL}. We will denote it by $E_A$. Moreover, when the evolution algebra is non-degenerate (which is the case as perfect implies non-degenerate), the associated graph is connected if and only if the evolution algebra is irreducible (see, for example \cite[Corollary 5.8]{CSV1}).
\end{remark}


\section{Basic ideals}



A notion that will be very useful in order to classify the evolution algebras we are studying is that of basic ideal.

\begin{definitions}
\rm
Let $A$ be an evolution algebra,  $I$ be a nonzero proper ideal of $A$ and  $i\in \{0, 1, 2\dots\}$. We will say that $I$ is an \emph{$i$-basic ideal relative to} a natural basis $B=\{e_j \ \vert \ j \in \Lambda\}$ (where $\Lambda$ is a nonempty set) if $I$ is generated by $i$ vectors from $B$.

We will say that $I$ is a \emph{maximal $i$-basic ideal relative to} $B$ if there are no basic ideals relative to $B$ having dimension $j>i$.

If $A$ has no basic ideals relative to any basis, then we will say that $A$ is \emph{basic simple}.
\end{definitions}

\begin{remark}
\rm
Note that every basic ideal is an evolution ideal and has the extension property (see \cite[Definitions 2.4]{CSV1} for the definitions).
\end{remark}

\begin{lemma}\label{acuarius}
Let $A$ be a perfect finite dimensional evolution algebra and let $I$ be an $i$-basic ideal relative to a natural basis $B$. Then $I$ is an $i$-basic ideal relative to any natural basis of $A$.
\end{lemma}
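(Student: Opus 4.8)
The plan is to reduce the statement to a rigidity property of the natural bases of a perfect, finite dimensional evolution algebra, namely that any two of them are related by a monomial (generalized permutation) matrix. First I would reformulate what it means for $I$ to be $i$-basic relative to $B=\{e_j \mid j\in\Lambda\}$: since a basic ideal has the extension property, $I$ admits a natural basis consisting of $i$ of the $e_j$, so that $I=\mathrm{span}\{e_j \mid j\in S\}$ for some $S\subseteq\Lambda$ with $\vert S\vert=i$. Because the dimension of a subspace is intrinsic, once I know that an arbitrary natural basis $B'$ also contains a spanning subset of $I$, the integer $i=\dim I$ is automatically the same. Hence it suffices to show that $I$ is spanned by a subset of every natural basis $B'$ of $A$.

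The heart of the argument is the monomial form of the change of basis. Since $A$ is perfect and finite dimensional, the structure matrix $M_B$ is invertible; equivalently, as $A^2=\mathrm{span}\{e_k^2 \mid k\in\Lambda\}$, the family $\{e_k^2\}_{k\in\Lambda}$ is linearly independent. Let $B'=\{f_j\}$ be any natural basis and write $f_j=\sum_k p_{kj}e_k$ with $P=(p_{kj})$ invertible. Expanding the product and using that the cross terms $e_ke_l$ ($k\neq l$) vanish gives $f_if_j=\sum_k p_{ki}\,p_{kj}\,e_k^2$. As $B'$ is a natural basis, $f_if_j=0$ for $i\neq j$, and the linear independence of the $e_k^2$ forces $p_{ki}\,p_{kj}=0$ for all $k$ and all $i\neq j$. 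Thus each row of $P$ has at most one nonzero entry, and since $P$ is invertible each row and column has exactly one; that is, there exist a bijection $\tau\colon\Lambda\to\Lambda$ and nonzero scalars $\mu_j$ with $f_j=\mu_j\,e_{\tau(j)}$.

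With this description the conclusion is immediate: from $I=\mathrm{span}\{e_k \mid k\in S\}$ and $f_j=\mu_j e_{\tau(j)}$ I obtain $I=\mathrm{span}\{f_j \mid j\in\tau^{-1}(S)\}$, where $\vert\tau^{-1}(S)\vert=\vert S\vert=i$; since $I$ is already an ideal equal to the span of these $i$ vectors of $B'$, it is generated by them and is therefore an $i$-basic ideal relative to $B'$. The step I expect to be the main obstacle is precisely the passage to the monomial form of $P$: it relies entirely on the independence of the squares $\{e_k^2\}$, i.e.\ on perfectness, which is exactly what rules out any genuine mixing of the distinguished basis directions. This is the algebraic counterpart of the basis-independence of the associated graph recorded in Remark \ref{milk} and of the remark in the introduction that, for perfect algebras, the admissible change-of-basis matrices are permutation matrices; once it is in place, the remaining bookkeeping with $\tau$ is routine.
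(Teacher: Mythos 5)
Your proof is correct and rests on exactly the same key fact as the paper's one-line argument, namely that for a perfect finite dimensional evolution algebra any two natural bases differ only by a permutation and rescaling of the vectors (so a set of $i$ basis vectors generating $I$ transfers directly to any other natural basis). The only difference is that the paper simply cites \cite[Theorem 4.4]{EL} for this rigidity, whereas you reprove it inline via the invertibility of $M_B$ and the linear independence of the squares $e_k^2$; the reduction is the same.
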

\begin{proof}
Since $A^2=A$ then the only natural basis that $A$ has consists of a permutation of the elements of $B$ (by \cite[Theorem 4.4]{EL}).
\end{proof}

Lemma \ref{acuarius} gives rise to the following definitions.

\begin{definitions}\label{defbasic}
\rm
Let $A$ be a perfect finite dimensional evolution algebra and let $I$ be an ideal of $A$.

\begin{enumerate}[\rm (i)]
\item $I$ is an \emph{$i$-basic ideal} if it is an $i$-basic ideal relative to any natural basis.
\item $I$ is a \emph{maximal $i$-basic ideal} if it is a maximal $i$-basic ideal relative to any natural basis.
\item $A$ is \emph{basic simple} if the ideal generated by every element in any natural basis is $A$.
\end{enumerate}
\end{definitions}

\begin{lemma}\label{maximal}
Let $A$ be a perfect finite dimensional evolution algebra. Then $A$ contains a maximal $i$-basic ideal for some $i\in \{0,1, \dots, n-1\}$.
\end{lemma}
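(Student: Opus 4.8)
The plan is to reduce the claim to the elementary observation that a nonempty finite set of integers has a largest element. I would fix, once and for all, a natural basis $B=\{e_1,\dots,e_n\}$ of $A$, where $n=\dim(A)$. By Lemma \ref{acuarius} the notion of $i$-basic ideal does not depend on the chosen natural basis, so whatever I establish relative to $B$ holds relative to every natural basis; in particular the notion of maximal $i$-basic ideal from Definitions \ref{defbasic} is well posed, and it suffices to produce a maximal $i$-basic ideal relative to $B$.

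Next I would pin down the range of the admissible indices. An $i$-basic ideal is, by definition, a nonzero proper ideal generated by $i$ of the $n$ vectors of $B$. Being generated by $i$ distinct basis vectors forces $i\le n$; properness rules out $i=n$ (that choice would generate all of $A$), while the requirement that the ideal be nonzero rules out $i=0$ (which would give the zero ideal). Hence every $i$-basic ideal has $i\in\{1,\dots,n-1\}$, and moreover, by the extension property recorded for basic ideals, such an ideal has dimension exactly $i$. Accordingly I would set
$$S:=\{\, i \mid A \text{ possesses an } i\text{-basic ideal relative to } B\,\}\subseteq\{1,\dots,n-1\},$$
a finite set.

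I would then split into two cases. If $S=\emptyset$, then $A$ has no nonzero proper basic ideal, i.e. $A$ is basic simple; since there is then no basic ideal of dimension $j>0$, the zero ideal serves as a maximal $0$-basic ideal and the statement holds with $i=0$. If $S\neq\emptyset$, then $S$ has a maximum $i_0:=\max S\in\{1,\dots,n-1\}$. By definition of the maximum there exists an $i_0$-basic ideal $I$, and there is no basic ideal of dimension $j>i_0$ (a basic ideal of dimension $j$ is a $j$-basic ideal, so $j\in S$ and thus $j\le i_0$); by Definitions \ref{defbasic} this says precisely that $I$ is a maximal $i_0$-basic ideal. In either case $A$ contains a maximal $i$-basic ideal for some $i\in\{0,1,\dots,n-1\}$.

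I do not expect a genuinely hard step: the argument is a boundedness-plus-maximum argument. The only points deserving care, which I regard as the main (mild) obstacle, are the bookkeeping around the degenerate index $i=0$---one must agree that the basic simple case is exactly the case of a maximal basic ideal of dimension $0$, so that the index range $\{0,1,\dots,n-1\}$ is honestly attained---and the appeal to Lemma \ref{acuarius}, which is what licenses speaking of maximal $i$-basic ideals without reference to a particular natural basis.
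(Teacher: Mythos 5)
Your proof is correct and follows essentially the same route as the paper's: both arguments reduce to the finiteness of the set of possible dimensions of basic ideals, take the maximum when that set is nonempty, and fall back on the zero ideal (the basic simple case, index $i=0$) when it is empty, with Lemma \ref{acuarius} guaranteeing independence of the choice of natural basis. Your version is merely a bit more explicit about the bookkeeping (why $i\in\{1,\dots,n-1\}$ for a genuine basic ideal and why the index coincides with the dimension), but the underlying idea is identical.
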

\begin{proof}
As follows by Lemma \ref{acuarius}, the existence of maximal basic ideals does not depend on the natural basis. Let $B$ be any natural basis of $A$. Since $A$ is perfect, $e^2\neq 0$ for any $e\in B$. If the ideal generated by every element of $B$ is $A$, then $0$ is a maximal basic ideal. Otherwise, let $i\neq {\rm dim}(A)$ be the maximum of the dimensions of the ideals generated by elements in the basis $B$. Then $A$ has a maximal $i$-basic ideal.
\end{proof}

{The example that follows shows that maximal basic ideals are not, necessarily unique. However, there are some cases in which the unicity is true.}

\begin{example}\label{maximalNotUnique}
\rm
A maximal basic ideal is not necessarily unique. For an example, consider the evolution algebra $A$ having a natural basis $B= \{e_1, e_2, e_3, e_4\}$ and with product given by $e_1^2 = e_2$, $e_2^2=e_1$, $e_3^2=e_1+e_2+e_3$, $e_3^2=e_2+e_4$.
and take $I$ and $J$ the ideals generated by $\{e_1, e_2, e_3\}$ and $\{e_1, e_2, e_4\}$, respectively, Then $I$ and $J$ are maximal $3$-basic ideals.

Moreover, the uniqueness does not depend on the irreducibility of the algebra, as the evolution algebra $A$ we have considered in this example is irreducible.
\end{example}

Now we relate the notions of simple and basic simple evolution algebra.

\begin{lemma}\label{aguita}
Let $A$ be a finite dimensional evolution algebra $A$. The following are equivalent conditions:
\begin{enumerate}[\rm (i)]
\item $A$ is simple.
\item $A=A^2$ and $E_A$ is strongly connected.
\item $A=A^2$ and $A$ is basic simple.
\end{enumerate}
\end{lemma}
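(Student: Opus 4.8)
The plan is to prove the cycle of implications (i) $\Rightarrow$ (iii) $\Rightarrow$ (ii) $\Rightarrow$ (i). Two tools will do all the work. The computational one is that in a natural basis $B=\{e_i\}$ we have $e_ie_j=0$ for $i\neq j$, so multiplying any element of an ideal by a basis vector collapses everything to a single square: $\bigl(\sum_i\lambda_i e_i\bigr)e_{j_0}=\lambda_{j_0}e_{j_0}^2$. The combinatorial one is the dictionary between ideals spanned by basis vectors and \emph{forward-closed} vertex sets of $E_A$: since there is an edge $i\to k$ exactly when $\omega_{ki}\neq 0$ (equivalently $e_k$ occurs in $e_i^2$), a subspace $\operatorname{lin}\{e_w:w\in H\}$ is an ideal precisely when $w\in H$ and $w\to k$ force $k\in H$. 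Because $A=A^2$ is assumed in (ii) and (iii), the graph $E_A$ is well defined by Remark \ref{milk}, and in (i) I will first derive $A=A^2$ before speaking of $E_A$.

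For (i) $\Rightarrow$ (iii) the argument is immediate. Since $A$ is simple we have $A^2\neq 0$ (this is built into simplicity, excluding the one-dimensional null algebra), and as $A^2$ is an ideal this forces $A^2=A$. Then for any basis vector $e_i$ the ideal $\langle e_i\rangle$ is nonzero, hence equals $A$ by simplicity; since this holds in every natural basis, $A$ is basic simple.

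For (iii) $\Rightarrow$ (ii) I would argue by contraposition. If $E_A$ is not strongly connected, choose vertices $u,v$ with no path $u\to v$ and let $H$ be the set of vertices reachable from $u$ (including $u$). Then $H$ is forward-closed, contains $u$ and omits $v$, so by the dictionary $I:=\operatorname{lin}\{e_w:w\in H\}$ is a nonzero proper basic ideal. Consequently $\langle e_u\rangle\subseteq I\subsetneq A$, so $A$ fails to be basic simple.

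The substance of the lemma is (ii) $\Rightarrow$ (i), and the hard part will be exactly the upgrade from the absence of \emph{basic} ideals (strong connectivity) to the absence of \emph{arbitrary} ideals. Given a nonzero ideal $I$, I would pick $0\neq x=\sum_i\lambda_i e_i\in I$ and some $j_0$ with $\lambda_{j_0}\neq 0$; then $xe_{j_0}=\lambda_{j_0}e_{j_0}^2$ gives $e_{j_0}^2\in I$. The key propagation step is that whenever $e_j^2\in I$ and there is an edge $j\to m$, the product $e_j^2 e_m=\omega_{mj}e_m^2$ shows $e_m^2\in I$; iterating along paths and invoking strong connectivity yields $e_m^2\in I$ for every vertex $m$, so $A^2\subseteq I$, and since $A=A^2$ we conclude $I=A$. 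I expect this reachability-to-membership propagation to be the only step requiring genuine care, the other two implications being the routine bookkeeping of the basic-ideal/forward-closed-set correspondence.
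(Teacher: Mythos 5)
Your proposal is correct, and it takes a noticeably different route from the paper. The paper argues (i) $\Rightarrow$ (ii) and (ii) $\Rightarrow$ (iii) by brief graph-theoretic remarks, and closes the cycle by outsourcing the equivalence (i) $\Leftrightarrow$ (iii) to \cite[Corollary 4.6]{CSV2}; in particular the genuinely hard content --- passing from the absence of \emph{basic} ideals back to the absence of \emph{arbitrary} ideals --- is never argued in the text, only cited. You instead run the cycle (i) $\Rightarrow$ (iii) $\Rightarrow$ (ii) $\Rightarrow$ (i) self-containedly, and your proof of (ii) $\Rightarrow$ (i) is exactly the missing ingredient: from $0\neq x\in I$ you extract $e_{j_0}^2\in I$ via $xe_{j_0}=\lambda_{j_0}e_{j_0}^2$, propagate membership of squares along edges using $e_j^2e_m=\omega_{mj}e_m^2$, invoke strong connectivity to reach every vertex, and conclude $A=A^2\subseteq I$. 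Your identification of basic ideals with forward-closed vertex sets (correctly matching the paper's edge convention $i\to k$ iff $\omega_{ki}\neq 0$) also makes (iii) $\Rightarrow$ (ii) cleaner than the paper's one-line contrapositive. What your approach buys is a proof readable without the external reference; what the paper's buys is brevity, at the cost of leaving the key implication implicit.
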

\begin{proof}
(i) $\Rightarrow$ (ii). If $A$ is simple, then $A=A^2\neq 0$ and the graph $E_A$ is unique (see \cite[Corollary 4.5]{EL} ). Moreover, $E_A$ has to be strongly connected because  if there are two vertices which are not connected, then there exists a proper nonzero ideal.

(i) $\Leftrightarrow$ (iii) follows by \cite[Corollary 4.6]{CSV2}.

(ii) $\Rightarrow$ (iii). Since $E_A$ is unique (because $A$ is perfect) and a nonzero proper basic ideal would give rise to a non strongly connected graph, the result follows.
\end{proof}

We cannot eliminate the hypothesis $A=A^2$ in Lemma \ref{aguita}, as shown in the example that follows.

\begin{example}
\rm
Consider the evolution algebra $A$ having a natural basis $\{e_1, e_2\}$ such that $e_i^2=e_1+e_2$, for $i=1, 2$. This evolution algebra is neither simple nor basic simple as $e_1+e_2$ generates a one dimensional ideal (which is a basic ideal). However, one associated graph for $A$ is

$$ \xymatrix{
{\bullet}  \ar@(ul,dl) \ar@/^-.5pc/[r]   & {\bullet}
\ar@(ur,dr)   \ar@/_.5pc/[l]}
$$
which is strongly connected.

\end{example}

\begin{corollary}
 For a perfect evolution algebra, being basic simple does not depend on the natural basis.
\end{corollary}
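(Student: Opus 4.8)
The plan is to deduce the statement directly from Lemma \ref{acuarius}, which already contains the essential content: for a perfect finite dimensional evolution algebra, the notion of being an $i$-basic ideal is insensitive to the choice of natural basis. First I would record the definition of basic simplicity in its ``local'' form, namely that $A$ is basic simple relative to a natural basis $B$ precisely when the ideal generated by each element of $B$ equals $A$ or, equivalently (using that perfectness forces $e^2\neq 0$, so $\langle e\rangle$ is nonzero for every $e\in B$), when $A$ admits no nonzero proper basic ideal relative to $B$. The assertion to be proved is then that this property of $B$ does not in fact depend on $B$.

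Next I would argue by contraposition. Suppose $A$ fails to be basic simple relative to some natural basis $B'$; that is, there is a nonzero proper ideal $I$ of $A$ which is an $i$-basic ideal relative to $B'$ for some $i\geq 1$. Applying Lemma \ref{acuarius} to $I$, the same ideal $I$ is an $i$-basic ideal relative to \emph{every} natural basis of $A$, in particular relative to any other fixed basis $B$. Hence $A$ is not basic simple relative to $B$ either. Since $B$ and $B'$ play symmetric roles, this shows that ``basic simple relative to $B$'' holds for one natural basis if and only if it holds for all of them, which is exactly the claim.

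Finally I would point out where perfectness enters: it is used only through Lemma \ref{acuarius} (hence through \cite[Theorem 4.4]{EL}), which guarantees that every natural basis of $A$ is a permutation of a fixed one, so that the subsets generating ideals correspond across bases; the standing finite-dimensional hypothesis is what makes this available. As an alternative route one could invoke Lemma \ref{aguita}: since $A=A^2$, basic simplicity coincides with simplicity, and the latter is an intrinsic, basis-free property, which again yields the conclusion. I expect no substantive obstacle here, as the real work has already been carried out in Lemma \ref{acuarius}; the corollary is essentially a restatement of that lemma in the language of basic simplicity.
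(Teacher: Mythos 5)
Your proof is correct and matches what the paper intends: the corollary is stated without proof, immediately after Lemma \ref{acuarius} and Lemma \ref{aguita}, precisely because it follows by the argument you give (a nonzero proper basic ideal relative to one natural basis is, by Lemma \ref{acuarius}, basic relative to every natural basis, so the failure of basic simplicity transfers between bases). Your alternative route via Lemma \ref{aguita} is also valid; the only cosmetic point is that $\langle e\rangle\neq 0$ already because $e\in\langle e\rangle$, so the appeal to $e^2\neq 0$ there is unnecessary.
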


From now on, if $S$ is a subset of an algebra $A$ then we will denote by
$\left\langle S\right\rangle $ the ideal of $A$ generated by $S.$

Our classification relies on the result that follows.

\begin{proposition}\label{BasicSimpleQuotient}
Let $A$ be a finite $n$-dimensional evolution perfect algebra (for $n\geq 2$) and let $I$ be a maximal $i$-basic ideal having dimension 1, $n-1$ or $n/2$ (being $n$ even in this last case).
Then $A/I$ is a basic simple perfect evolution algebra.
\end{proposition}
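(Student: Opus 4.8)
The plan is to verify the two conclusions — that $A/I$ is perfect and that it is basic simple — separately, working throughout with the block decomposition of the structure matrix furnished by Proposition \ref{simple-mat}. After reordering $B$ so that $I=\mathrm{span}\{e_1,\dots,e_i\}$, I would write $M_B=\begin{pmatrix} W & U\\ 0 & Y\end{pmatrix}$, where $W$ is the $i\times i$ structure matrix of $I$ and $Y$ is the $(n-i)\times(n-i)$ structure matrix of $A/I$ relative to the natural basis $\pi(\{e_{i+1},\dots,e_n\})$, with $\pi\colon A\to A/I$ the canonical projection. That $A/I$ is again an evolution algebra with this natural basis is clear, since $\pi(e_j)\pi(e_k)=\pi(e_je_k)=0$ for $j\neq k$.

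For perfectness I would use that a finite-dimensional evolution algebra is perfect exactly when its structure matrix is invertible: the columns of $M_B$ are the vectors $e_j^2$, so $A=A^2$ means they span $A$. As $M_B$ is block upper-triangular and invertible, both $W$ and $Y$ are invertible, and invertibility of $Y$ gives $A/I=(A/I)^2$. Conceptually this is just $(A/I)^2=\pi(A^2)=\pi(A)=A/I$.

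The substance is basic simplicity, which I would establish by a lifting argument. Since $A/I$ is perfect, its natural basis is unique up to permutation and rescaling (Lemma \ref{acuarius} and \cite[Theorem 4.4]{EL}); in particular being basic is basis-independent for $A/I$ and every basic ideal of $A/I$ is the span of a subset of $\{\pi(e_{i+1}),\dots,\pi(e_n)\}$. Suppose, for contradiction, that $A/I$ had a nonzero proper basic ideal $\bar J=\mathrm{span}\{\pi(e_j):j\in S\}$ for some nonempty proper $S\subseteq\{i+1,\dots,n\}$. Its preimage $J=\pi^{-1}(\bar J)=\mathrm{span}\{e_j:j\in\{1,\dots,i\}\cup S\}$ is an ideal of $A$ (being the preimage of an ideal) that is spanned by a subset of $B$, hence a basic ideal of $A$, and $i<\dim J=i+|S|<n$. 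This contradicts the maximality of $I$. Hence $A/I$ has no nonzero proper basic ideal, i.e.\ it is basic simple.

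I expect the only delicate point to be the basis bookkeeping in the lifting step: one must know that an arbitrary basic ideal of $A/I$, a priori defined relative to some natural basis of the quotient, can be taken to be the span of images of members of $B$, so that its preimage is again spanned by members of $B$ and is therefore genuinely basic in $A$. This is precisely where perfectness of $A/I$ (and the resulting uniqueness of its natural basis) is invoked. I note that the argument is uniform in $i$ and does not actually use the specific values $1,\,n-1,\,n/2$; these are the dimensions needed in the sequel, and for $n=4$ they are exactly $1,3,2$, so that the proposition then applies to every maximal basic ideal. As an alternative one could argue graph-theoretically via Lemma \ref{aguita}: it suffices to show $E_{A/I}$ is strongly connected, and a proper nonempty set of vertices of $E_{A/I}$ emitting no edges outside itself would, together with the vertices of $I$, produce such a set in $E_A$ of cardinality strictly between $i$ and $n$, again contradicting maximality.
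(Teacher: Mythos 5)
Your proof is correct and follows essentially the same route as the paper: perfectness via invertibility of the block upper-triangular structure matrix, and basic simplicity by lifting a hypothetical basic ideal of $A/I$ (using Lemma \ref{acuarius} to reduce to a span of images of elements of $B$) to a basic ideal of $A$ of dimension strictly between $i$ and $n$, contradicting maximality. The only differences are cosmetic: you replace the paper's three-case determinant computation with the uniform observation that $(A/I)^2=\pi(A^2)=A/I$ (equivalently $\det Y\neq 0$), and you correctly note that the argument never uses the specific values $1$, $n-1$, $n/2$.
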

\begin{proof}
Let $B=\{e_j \ \vert \ j \in \{1, \dots, n\} \}$ be a natural basis of $A$ such that $I=\langle \{e_1, \dots, e_i\}\rangle$.
It is easy to see that $A/I$ is an evolution algebra with $B':=\{\overline{e_{i+1}}, \dots, \overline{e_n}\}$ being a natural basis.

We see first that $A/I$ is a perfect evolution algebra.

{\bf Case $i=1$}. Write
$M_B =
\begin{pmatrix}
\alpha & U \\
0_{n-1 \times 1} & Y
\end{pmatrix}$, where $\alpha\in K$, $U\in M_{1 \times n-1}(K)$ and $Y=M_{n-1}(K)$. Then $\vert M_B \vert = (-1)^n\alpha \vert Y \vert$. Since $A$ is perfect, then $\vert M_B \vert \neq 0$; therefore $\alpha \neq 0$ and $\vert Y \vert\neq 0$.

Note that $A/I$ is an $(n-1)$-dimensional evolution algebra and $Y=M_{B'}$. As we have proved, $\vert Y \vert\neq 0$, which implies that $A/I$ is perfect.

{\bf Case $i=n-1$}. Write
$M_B =
\begin{pmatrix}
W & U \\
0_{1\times n-1} & \alpha
\end{pmatrix}$, where $W\in M_{n-1}(K)$, $U\in M_{n-1 \times 1}(K)$ and $\alpha\in K$. Then $\vert M_B \vert = (-1)^n \alpha \vert W \vert$. Since $A$ is perfect,  $\vert M_B \vert \neq 0$; therefore $\vert W \vert\neq 0$ and $\alpha \neq 0$.

Note that $A/I$ is a $1$-dimensional evolution algebra which is perfect as $\alpha \neq 0$.

{\bf Case $i=n/2$ and $n$ is even.} Write
$M_B =
\begin{pmatrix}
W & U \\
0_{n/2} & Y
\end{pmatrix}$, where $W, U, Y\in M_{n/2}(K)$. By  \cite[Theorem 2]{Silvester} we have $\vert M_B \vert =  \vert W \vert  \vert Y \vert$. Since $A$ is perfect, $\vert M_B \vert \neq 0$; therefore $\vert W \vert\neq 0$ and $\vert Y \vert\neq 0$.

Reasoning as in the first case we obtain that $A/I$ is perfect.

Now we prove that $A/I$ is basic simple. If this is not the case, then there exists a $j$-basic ideal $\overline J$ of $A/I$, for some $j$. Since $A/I$ is perfect (as we have proved before), by Lemma \ref{acuarius} the basic ideals of $A/I$ do not depend on the basis.
 This means that we may assume that there exists $\overline{B_J}= \{\overline{e_k} \ \vert \ k\in \Lambda_j \ \}$, where $\Lambda_j\subsetneq \{i+1, \dots, n\}$, such that $\overline J$ is generated by $\overline{B_J}$. Then $\{e_1, \dots, e_i\}\cup \{e_k\}_{k\in \Lambda_j}$  generates an ideal $I'$ of $A$ having dimension $i+j\neq n$. Note that $I'$ is an ($i+j$)-basic ideal, a contradiction to the maximality of $i$ as the dimension of a basic ideal.
\end{proof}

\medskip

\begin{definition}
\rm
Let $A$ be an evolution algebra and assume that there exists a natural basis $B$ such that
$M_B =
\begin{pmatrix}
W & U \\
0& Y
\end{pmatrix}$.
We will say that the \emph{number of zeros} of $W$ \emph{is invariant} if for any basis $B'$ such that $M_{B'} =
\begin{pmatrix}
W' & U' \\
0& Y'
\end{pmatrix}$,
the number of zeros in $W'$ coincides with the number of zeros in $W$. Analogous definitions can be given for $U$ and $Y$.
\end{definition}

\begin{examples}\label{EjemplosDeTodo}
\rm
Let $A$ be an evolution algebra and assume that there exists a natural basis $B$ such that
$M_B =
\begin{pmatrix}
W & U \\
0& Y
\end{pmatrix}$.
\begin{enumerate}[\rm (i)]
\item\label{Ej1} The number of zeros of $U$ and of $W$ is not necessarily invariant. Take $A$ as the evolution algebra with natural basis $B=\{e_1, e_2, e_3, e_4\}$ and product given by

\medskip

\begin{center}
$M_B$:=\scalebox{0.60}{$\left(\begin{tabular}{ccc|c}
0 & 1 &  1  & 0\\
1 & 0& 1 & 1\\
0 & 0 &  1  & 0\\
\hline
0 & 0 &  0 & 1\\
\end{tabular}\right).
$}
\end{center}

\medskip

Now, take $B':=\{e_4, e_1, e_2, e_3\}$. Then

\medskip

\begin{center}
$M_{B'}$:=\scalebox{0.60}{$\left(\begin{tabular}{ccc|c}
1& 0 &  0  & 0\\
0 & 0& 1 & 1\\
1 & 1 &  0  & 1\\
\hline
0 & 0 &  0 & 1\\
\end{tabular}\right),
$}
\end{center}

\medskip

\noindent
and the reader can see that the statement is true.


\item \label{Ej2}  The number of zeros in $Y$ is not necessarily invariant. Consider the evolution algebra with  natural basis $B=\{e_1, e_2, e_3, e_4\}$, and let $B':=\{e_3, e_1, e_2, e_4\}$. then

\begin{center}
$M_B$:=\scalebox{0.60}{$
\left(\begin{tabular}{c|ccc}
1 & 0 &  0 & 1\\
\hline
0 & 1& 0 & 1\\
0 & 1 &  1 & 1\\
0 & 1 &  0& -1\\
\end{tabular}\right)$} and $M_{B'}$:=\scalebox{0.60}{$
\left(\begin{tabular}{c|ccc}
1 & 0 &  1& 1\\
\hline
0 & 1& 0 & 1\\
0 & 0 &  1 & 1\\
0 & 0 &  1& -1\\
\end{tabular}\right)$},
\end{center}

\medskip

\noindent
which proves the statement.
\item Even if the number of zeros of $U$ is invariant, the number of zeros of $W$ and $Y$ need not  be invariant. Consider the evolution algebra with natural basis $B=\{e_1, e_2, e_3, e_4\}$ and product given by

\begin{center}
	$M_B$:=\scalebox{0.60}{$
\left(\begin{tabular}{cc|cc}
1 & 1 &  0 & 0\\
0 & 1& 0 & 1\\
\hline
0 & 0 &  1 & 0\\
0 & 0 &  0& 1\\
\end{tabular}\right)$}.
\end{center}

Then, for every permutation $\sigma\in S_4$ such that $M_{B^\sigma}:=
\begin{pmatrix}
W^\sigma & U^\sigma\\
0 & Y^\sigma \\
\end{pmatrix},$
where $B^\sigma:=\{e_{\sigma(1)}, e_{\sigma(2)}, e_{\sigma(3)}, e_{\sigma(4)}\}$, we have that the number of zeros of $U^\sigma$ is three. This means that the number of zeros of $U$ is invariant.
Now, take $B':=\{e_3, e_1, e_4, e_2\}$. Then

\begin{center}
	$M_B'$:=\scalebox{0.60}{$
\left(\begin{tabular}{cc|cc}
1 & 0 &  0 & 0\\
0 & 1& 0 & 1\\
\hline
0 & 0 &  1 & 0\\
0 & 0 &  1& 1\\
\end{tabular}\right)$},

\end{center}

\noindent
what shows that the number of zeros of $W$ and $Y$ is not invariant.
\item \label{Ej3}  Even if the number of zeros of $W$ is  invariant, the number of zeros of $U$ and $Y$ need not  be. Take $B$ and $B'$ as in \eqref{Ej2}.
\item\label{Ej4} Even if the number of zeros of $Y$ is  invariant, the number of zeros of $W$ and $U$ need not  be. Take $B$ and $B'$ as in \eqref{Ej1}.
\end{enumerate}
\end{examples}

We will obtain the classification of evolution algebras in this case taking into account the
number of nonzero entries and the possible expression of the structure matrix, as explained in \cite[Corollary 4.6]{CSV1}. For the sake of completeness, we include it here.

\begin{proposition}\label{simple-mat}
Let $A$ be a finite-dimensional evolution algebra of dimension $n$ and $B= \{e_i \ \vert \ i \in \Lambda\}$ a natural basis of $A$. Then $A$ is simple if and only if the determinant of the structure matrix $M_B(A)$ is nonzero and $B$ cannot be reordered in such a way that the corresponding structure matrix is as follows:
\begin{equation*}
 \left(
\begin{array}{cc}
W_{m\times m} & U_{m\times (n-m)} \\
0_{(n-m)\times m} & Y_{(n-m)\times (n-m)}
\end{array}
\right),
\end{equation*}
for some $m\in \mathbb N$ with $m<n$ and matrices $W_{m\times m},$\textit{\ }$U_{m\times
(n-m)}$\textit{\ and }$Y_{(n-m)\times (n-m)}.$
\end{proposition}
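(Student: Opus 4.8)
The plan is to prove both implications of the biconditional, recalling that for an evolution algebra simplicity is equivalent to having no nonzero proper \emph{evolution} ideals (as noted in the preliminaries via \cite{CSV1}). I would first dispose of the easy direction. Suppose $A$ is simple. Then $A$ has no nonzero proper ideals, so in particular $A=A^2$ (otherwise $A^2$ would be a proper ideal since $A\neq 0$), which forces $\vert M_B(A)\vert \neq 0$. For the block form, I would argue contrapositively: if $B$ could be reordered so that the structure matrix were block upper-triangular with a zero block $0_{(n-m)\times m}$, then the span of the last $n-m$ basis vectors $\{e_{m+1}, \dots, e_n\}$ would be a proper nonzero evolution ideal. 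Indeed, the vanishing of the lower-left block says exactly that $e_j^2 \in \operatorname{span}\{e_{m+1}, \dots, e_n\}$ for $j > m$, so this subspace is closed under squaring of its natural basis vectors, and since distinct natural basis vectors multiply to zero it is an (evolution) ideal, contradicting simplicity.

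For the converse, I would assume that $\vert M_B(A)\vert \neq 0$ and that $B$ admits no reordering yielding the displayed block form, and prove $A$ is simple. Since simplicity is equivalent to the absence of nonzero proper evolution ideals, suppose toward a contradiction that $I$ is such an ideal. The crucial structural input is that an evolution ideal has the extension property: its natural basis can be extended to a natural basis of $A$ (this is the content flagged in the \cite{CSV1} definitions cited earlier). Thus after reordering $B$ I may assume $I$ is spanned by $\{e_1, \dots, e_k\}$ for some $0 < k < n$. I would then exploit the ideal condition: for each $j > k$ the product $e_j \cdot e_i$ must lie in $I$ for all $i$; since $e_j e_i = 0$ for $i \neq j$, the only constraint is that $e_j^2$ relates to $I$ via the ideal property applied to $e_i \in I$, namely $e_i e_j = 0 \in I$ automatically, so instead I would extract the block structure from the fact that $I$ is a \emph{subalgebra}: $e_i^2 \in I$ for $i \le k$ means the first $k$ columns of $M_B$ have support only in rows $1, \dots, k$, which is precisely the vanishing of the lower-left block $0_{(n-k)\times k}$. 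This produces the forbidden block form, the desired contradiction.

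The main obstacle, and the step demanding the most care, is getting the index bookkeeping exactly right in the converse: one must be careful about whether the ideal condition produces a zero lower-left block (columns indexed by $I$) or a zero block in the complementary position, and this hinges on the convention $e_i^2 = \sum_k \omega_{ki} e_k$ fixed in the preliminaries, where the column index $i$ is the vector being squared and the row index $k$ labels the coefficient. Under this convention, $I=\operatorname{span}\{e_1,\dots,e_k\}$ being a subalgebra forces $\omega_{\ell i} = 0$ whenever $i \le k < \ell$, i.e.\ the lower-left block vanishes, matching the displayed form with $m = k$. The other delicate point is justifying that the reordering realizing the extension property does not spoil the hypothesis; but since the hypothesis quantifies over \emph{all} reorderings of $B$ (``$B$ cannot be reordered''), any block form I produce after reordering is already excluded, so no subtlety arises there. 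Finally I would note that the nondegeneracy coming from $\vert M_B\vert \neq 0$ guarantees $A\neq 0$ and $A=A^2$, closing the argument; I expect this converse direction, specifically the passage from an abstract evolution ideal to the explicit triangular block decomposition via the extension property, to be where the real work lies.
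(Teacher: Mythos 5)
The paper does not prove this proposition at all: it is imported verbatim from \cite[Corollary 4.6]{CSV1}, so your attempt has to be judged on its own merits rather than against an in-paper argument. Your forward direction is essentially sound (simplicity forces $A=A^2$, hence $\vert M_B\vert\neq 0$ since the columns of $M_B$ are the coordinates of the $e_i^2$), but note that you have misread the block condition there: with the convention $e_i^2=\sum_k\omega_{ki}e_k$, the vanishing of $0_{(n-m)\times m}$ says that $e_i^2\in\operatorname{span}\{e_1,\dots,e_m\}$ for $i\le m$, so the ideal exhibited is the span of the \emph{first} $m$ basis vectors, not of the last $n-m$ as you claim. Your own converse uses the correct reading, so the two halves of your write-up are inconsistent; this slip is easily repaired but must be repaired.

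The genuine gap is in the converse, precisely where you say the real work lies. Your ``crucial structural input'' --- that a nonzero proper evolution ideal has the extension property --- is not a theorem: in \cite{CSV1} the extension property is a separate definition that a general evolution ideal need not satisfy (this paper itself only asserts it for \emph{basic} ideals), and you give no argument for it under the hypothesis $\vert M_B\vert\neq 0$. Even if you had it, extending a natural basis of $I$ produces \emph{some} natural basis of $A$, not a reordering of the given $B$; to conclude that $B$ itself can be reordered into the block form you would additionally need the uniqueness of the natural basis up to permutation and scaling (\cite[Theorem 4.4]{EL}, available because $\vert M_B\vert\neq 0$ gives $A=A^2$) together with the remark that rescaling preserves the zero pattern. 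None of this is addressed. The route that actually works, and the one behind \cite[Corollary 4.6]{CSV1}, avoids the extension property entirely: pick $0\neq x=\sum\lambda_ie_i$ in a nonzero proper ideal $I$; from $e_ix=\lambda_ie_i^2$ deduce $e_i^2\in I$ for $i$ in the support of $x$, and inductively $e_j^2\in I$ for every descendant $j\in D(i)$; since $\vert M_B\vert\neq 0$ the vectors $\{e_j^2: j\in D(i)\}$ are linearly independent, so $D(i)$ is a nonempty proper subset of $\Lambda$ that is closed under first-generation descendants, and reordering $B$ to list $D(i)$ first yields exactly the forbidden block form. You should replace the extension-property step with an argument of this kind.
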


Following the same notation as in \cite[Subsection 3.1]{CSV2}, for any natural number $n$, we define the semidirect product $S_n \rtimes (\K^{\times})^n$.
It is not difficult to see that any matrix $P$ in $S_n \rtimes (\K^{\times})^n$ is a change of basis matrix from a natural basis $B$ into another natural basis $B'$ and the relationship among the structure matrices $M_B$ and $M_{B'}$ and the matrix $P$ is as given in \cite[Condition (5)]{CSV2}, that is, $P^{-1}M_BP^{(2)}=M_{B'}$. This is the reason because we defined in \cite[Subsection 3.1]{CSV2} the action of $P$ on $M_B$ by:
$$P \cdot M_B = P^{-1}M_BP^{(2)}.$$

For any $\sigma \in S_n$, we denote by $I_\sigma$ the matrix obtained from the identity  changing the columns as determined by $\sigma$ (i.e. the $i$-column in $I_\sigma$ is $C_{\sigma(i)}$, where $C_j$ denotes the $j$-column in the identity matrix).

The result that follows is a generalization to $n$-dimensional evolution algebras of \cite[Proposition 3.2]{CSV2}. We do not include its proof as it is similar.

\begin{proposition}\label{BatidoDeFresasConLeche}
For any natural number $n$, any $P\in S_n\rtimes (\K^\times)^n$ and any $M \in \mathcal{M}_n(\K)$ we have:
\begin{enumerate}[\rm (i)]
\item\label{numerodeceros1} The number of zero entries in $M$ coincides with the number of zero entries in $P \cdot M$.
\item\label{numerodeceros2} The number of zero entries in the main diagonal of $M$ coincides with the number of zero entries in the main diagonal of $P \cdot M$.
\item\label{numerodeceros3} The rank of $M$ and the rank of $P\cdot M$ coincide.
\item\label{numerodeceros4} Assume that $M$ is the structure matrix of an evolution algebra $A$ relative to a natural basis $B$. Assume that $A^2=A$. If $N$ is the structure matrix of $A$ relative to a natural basis $B'$ then there exists $Q\in S_n\rtimes (\K^\times)^n$ such that $N= Q \cdot M$.
\end{enumerate}
\end{proposition}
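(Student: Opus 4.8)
The plan is to reduce the first three items to a single explicit formula for the action, and to derive the last item from the uniqueness of the natural basis. First I would record that every $P\in S_n\rtimes(\K^\times)^n$ is a monomial (generalized permutation) matrix, so it may be written as $P=I_\sigma D$ with $\sigma\in S_n$ and $D=\mathrm{diag}(d_1,\dots,d_n)$, $d_j\in\K^\times$. Using the convention for $I_\sigma$ fixed just before the statement, the fact that $P^{(2)}$ denotes the entrywise square of $P$, and that $P^{-1}$ is again monomial, a direct computation of $P^{-1}MP^{(2)}$ yields
\[
(P\cdot M)_{ij}=\frac{d_j^{2}}{d_i}\,M_{\sigma(i),\sigma(j)}\qquad(1\le i,j\le n).
\]
The only delicate point is keeping the conventions straight (the description of $I_\sigma$ by its columns, and the entrywise squaring), so I expect this bookkeeping to be the main obstacle; once the formula is in hand, the rest is immediate.

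For (i), since the scalar $d_j^{2}/d_i$ is nonzero, we have $(P\cdot M)_{ij}=0$ if and only if $M_{\sigma(i),\sigma(j)}=0$; as $(i,j)\mapsto(\sigma(i),\sigma(j))$ is a bijection of $\{1,\dots,n\}^{2}$, the zero entries of $M$ and of $P\cdot M$ are in bijection, which gives the equality of their numbers. For (ii), I would specialize the formula to $i=j$, obtaining $(P\cdot M)_{ii}=d_i\,M_{\sigma(i),\sigma(i)}$; thus a diagonal entry of $P\cdot M$ vanishes precisely when the corresponding diagonal entry $M_{\sigma(i),\sigma(i)}$ does, and since $i\mapsto\sigma(i)$ is a bijection of $\{1,\dots,n\}$ the numbers of diagonal zeros agree. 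The structural fact making (ii) work is that one and the same permutation $\sigma$ acts on the row and the column indices, so diagonal positions are sent to diagonal positions. For (iii), note that $P^{-1}$ and $P^{(2)}$ are both invertible (each being monomial with nonzero entries), and left or right multiplication by an invertible matrix preserves rank; hence $\mathrm{rank}(P\cdot M)=\mathrm{rank}(M)$.

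For (iv) I would argue separately, this time using that $A$ is perfect. By \cite[Theorem 4.4]{EL} (the result already invoked in Lemma \ref{acuarius}), any natural basis $B'$ of $A$ is obtained from $B$ by permuting and rescaling its vectors; consequently the change-of-basis matrix $Q$ from $B$ to $B'$ is monomial, that is, $Q\in S_n\rtimes(\K^\times)^n$. Applying the relation recalled before the statement, namely $Q^{-1}M_B Q^{(2)}=M_{B'}$ (see \cite[Condition (5)]{CSV2}), and writing $M=M_B$ and $N=M_{B'}$, we obtain $N=Q\cdot M$ with $Q\in S_n\rtimes(\K^\times)^n$, as required. The genuinely new input in this last item is the uniqueness of the natural basis up to a monomial transformation; everything else is a consequence of the displayed formula.
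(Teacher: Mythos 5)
Your proof is correct, and it is essentially the argument the paper has in mind: the paper omits the proof, deferring to the two‑ and three‑dimensional case in \cite[Proposition 3.2]{CSV2}, which rests on exactly the computation you carry out. Your formula $(P\cdot M)_{ij}=\frac{d_j^{2}}{d_i}M_{\sigma(i),\sigma(j)}$ checks out with the paper's conventions for $I_\sigma$ and $P^{(2)}$, items (i)--(iii) follow from it as you say, and (iv) correctly combines \cite[Theorem 4.4]{EL} (uniqueness of the natural basis up to permutation and rescaling when $A^2=A$) with the change‑of‑basis relation $Q^{-1}M_BQ^{(2)}=M_{B'}$.
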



In order to study the change of basis matrices producing isomorphic evolution algebras whose structure matrix is the same as the inicial one, we stablish Proposition \ref{OneDimensional} . First we need some definitions.
Recall that a \emph{reducible evolution algebra} is an evolution algebra $A$
which can be decomposed as the direct sum (in the sense of \cite[Definition 5.3]{CSV1}) of two nonzero evolution algebras, equivalently, of two nonzero evolution ideals, equivalently, of two nonzero ideals, as shown in \cite[Lemma 5.2]{CSV1}. An evolution algebra which is not reducible will be called \emph{irreducible}.

The following definitions are \cite[Definitions 3.1]{CSV1}.

Let $B=\{e_{i}\ \vert \ i\in \Lambda \}$ be a natural basis
of an evolution algebra $A${\ and let }$i_{0}\in \Lambda .${\ The
\emph{first-generation descendents} of }$\ i_{0}$ {\ are the elements
of the subset }$D^{1}(i_{0})${\ given by: }
\begin{equation*}
D^{1}(i_{0}):=\left\{k\in \Lambda \ \vert \ e_{i_{0}}^{2}=\sum_k \omega _{ki_{0}}e_{k}\text{
with }\omega _{ki_{0}}\neq 0\right\}.
\end{equation*}
{In an abbreviated form, }$D^{1}(i_{0}):=\{j\in \Lambda\ \vert\ \omega
_{ji_{0}}\neq 0\}.${\ Note that }$j\in D^{1}(i_{0})${\ if and only
if, }$\pi _{j}(e_{i_{0}}^{2})\neq 0${\ (where }$\pi _{j}${\ is the canonical
projection of }$A${\ over }$\mathbb Ke_{j}${).}

{Similarly, we say that }$j${\ is a \emph{second-generation
descendent} of }$i_{0}${\ whenever }$j\in D^{1}(k)${\ for some }$
k\in D^{1}(i_{0}).${\ Therefore,}
\begin{equation*}
D^{2}(i_{0})=\bigcup\limits_{k\in D^{1}(i_{0})}D^{1}(k).
\end{equation*}
{By recurrency, we define the set of \emph{mth-generation descendents
} of }$i_{0}${\ as}
\begin{equation*}
{\ }D^{m}(i_{0})=\bigcup\limits_{k\in D^{m-1}(i_{0})}D^{1}(k).
\end{equation*}
{\ Finally, the \emph{set of descendents} of }$i_{0}${\ is
defined as the subset of }$\Lambda ${\ given by }
\begin{equation*}
D(i_{0})=\bigcup\limits_{m\in \mathbb{N}}D^{m}(i_{0}).
\end{equation*}
On the other hand, we say that $j\in \Lambda$ is an
\emph{ascendent} of $i_{0}$ if $i_{0}\in D(j);$ that is, $i_{0}$
is a \emph{descendent of} $j.$
\medskip

\begin{proposition}\label{OneDimensional}
Let $A$ be an $n$-dimensional perfect evolution algebra (for $n\geq 3$), and assume that there exists a maximal $s$-basic ideal $I$, for $s\in \{1, n/2\}$ (being $n$ even in this last case).
This implies that there exists
a natural basis $B= \{e_1, \dots, e_n \}$ such that $I=\langle \{e_1, \dots, e_s\}\rangle$ and
$M_B =
\begin{pmatrix}
W & U \\
0& Y
\end{pmatrix}$, where $W \in M_{s}(K)$ (in fact, $W$ is the structure matrix of $I$ relative to the basis $\{e_1, \dots, e_s\}$ of $I$), $U\in M_{s \times (n-s)}(K)$, $Y\in  M_{n-s}(K)$.
\begin{enumerate}[\rm (i)]
\item For $s=1$, the algebra $A$ is irreducible, the only possible permutations $\sigma$ producing isomorphic evolution algebras whose structure matrix has the same form as $M_B$ must satisfy  $\sigma(1)=1$.
\item For $s=1$, the number of zeros  in $W$, $U$ and $Y$ is  invariant.
\item Assume $s=n/2$ and $A$ irreducible. Then, the only possible permutations $\sigma$ producing isomorphic evolution algebras whose structure matrix has the same form as $M_B$ must satisfy  $\sigma(i)\in \{1, \dots, n/2\}$ for all $i \in \{1, \dots, n/2\}$ (and, consequently, $\sigma(j)\in \{(n/2)+1, \dots, n\}$ for all $j \in \{(n/2)+1, \dots, n\}$).
\item For $s=n/2$ and $A$ irreducible, the number of zeros in $W$, $U$ and $Y$ is  invariant. If $A$ is not irreducible, then the result is not true.
\end{enumerate}
\end{proposition}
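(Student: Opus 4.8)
The plan is to reduce all four assertions to a single combinatorial fact about permutations, combined with a block-wise application of Proposition~\ref{BatidoDeFresasConLeche}. By item (iv) of that proposition, any natural basis of $A$ is obtained from $B$ by the action of some $P\in S_n\rtimes(\K^\times)^n$; writing $\sigma$ for the underlying permutation and noting that the diagonal (scaling) part cannot create or destroy zeros, the zero pattern of the resulting structure matrix depends only on $\sigma$. The key observation is that $M_{B^\sigma}$ has block-triangular form with top-left block of size $s\times s$ exactly when $\mathrm{span}(e_{\sigma(1)},\dots,e_{\sigma(s)})$ is an ideal, equivalently when it is an $s$-basic ideal (one-step closure $e_{\sigma(i)}^2\in\mathrm{span}(e_{\sigma(1)},\dots,e_{\sigma(s)})$ for $i\le s$ suffices, since $e_ie_j=0$ for $i\neq j$). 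So the heart of parts (i) and (iii) is a \emph{uniqueness} statement: under the hypotheses, $I$ is the only $s$-basic ideal. Once this is known, $\sigma$ maps $\{1,\dots,s\}$ onto itself, hence $P$ is block diagonal for the partition $\{1,\dots,s\}\cup\{s+1,\dots,n\}$; then $P^{-1}MP^{(2)}$ respects the blocks, transforming $W$ and $Y$ by elements of the groups $S_k\rtimes(\K^\times)^k$ and $U$ by a row permutation-scaling on the left and a column permutation-scaling on the right, so item (i) of Proposition~\ref{BatidoDeFresasConLeche} applied to each block gives the invariance claimed in (ii) and (iv).

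For $s=1$ I would first prove irreducibility: if $A$ were reducible, then $A=A_1\oplus A_2$ with $A_1,A_2$ nonzero ideals, and since $A$ is perfect these summands are spanned by subsets of a natural basis, hence are basic ideals; as $n\geq 3$ one of them has dimension at least $2$, contradicting the maximality of $s=1$. For uniqueness, if $\langle e_1\rangle$ and $\langle e_v\rangle$ were two $1$-basic ideals with $v\neq 1$, then $\langle e_1,e_v\rangle$ would be a $2$-basic ideal, again contradicting maximality. Thus $\sigma(1)=1$, which is (i), and the block-wise argument yields (ii); here $W=(\omega_{11})$ is a single nonzero entry, so its zero count is trivially invariant, while the zeros of $U$ and $Y$ are preserved by the permutation-scaling of columns and the induced action on $Y$.

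The case $s=n/2$ is the crux. Suppose $\sigma$ preserves the form, so $I':=\langle e_{\sigma(1)},\dots,e_{\sigma(n/2)}\rangle$ is an $(n/2)$-basic ideal, and assume $I'\neq I$. Both $I\cap I'$ and $I+I'$ are basic ideals (generated respectively by the common, and by the union of the, distinguished basis vectors), and $\dim(I+I')=n-\dim(I\cap I')>n/2$ since $I'\neq I$ forces $\dim(I\cap I')<n/2$. Maximality of $s=n/2$ then forces $I+I'=A$, whence $\dim(I\cap I')=0$. As $I,I'$ are ideals, $II'+I'I\subseteq I\cap I'=0$, so $A=I\oplus I'$ is a decomposition into two nonzero ideals with vanishing cross products, i.e.\ $A$ is reducible, contradicting the hypothesis. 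Hence $I'=I$ and $\sigma(\{1,\dots,n/2\})=\{1,\dots,n/2\}$, which is (iii); the block-wise invariance argument then yields (iv).

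I expect this reducibility deduction to be the main obstacle, precisely because it is what fails without irreducibility, and it is also what I would exploit for the final claim of (iv): taking two non-isomorphic two-dimensional simple evolution algebras with different numbers of zeros in their structure matrices (for instance $I$ given by $e_1^2=e_2,\ e_2^2=e_1$, with two zeros, and $I'$ given by $e_3^2=e_3+e_4,\ e_4^2=e_3-e_4$, with none; both are strongly connected, hence simple by Lemma~\ref{aguita}, and $\mathrm{char}\,\K\neq 2$ makes $I'$ perfect), their direct sum $A=I\oplus I'$ is reducible with $s=n/2=2$ maximal, and reordering the two blocks changes the number of zeros in $W$ and in $Y$, so invariance fails. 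The remaining steps are routine once the uniqueness statements are in place; the only point requiring care throughout is that ``preserving the block form'' is governed solely by the permutation part $\sigma$, which is guaranteed by Lemma~\ref{acuarius} together with Proposition~\ref{BatidoDeFresasConLeche}.
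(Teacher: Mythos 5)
Your proposal is correct, and for parts (i), (ii), (iv) and the final counterexample it runs essentially parallel to the paper: irreducibility and $\sigma(1)=1$ for $s=1$ via the observation that $e_{\sigma(1)}^2=\alpha e_{\sigma(1)}$ would produce a $2$-basic ideal $\langle e_1,e_{\sigma(1)}\rangle$; invariance of the zero counts via the block-diagonal form of $P$ and Proposition \ref{BatidoDeFresasConLeche}(i) applied blockwise; and a reducible $2\oplus 2$ example (yours, $e_1^2=e_2,\ e_2^2=e_1$ against $e_3^2=e_3+e_4,\ e_4^2=e_3-e_4$, is interchangeable with the paper's).

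Where you genuinely diverge is the crux, part (iii). The paper fixes $\sigma$ with $\sigma(i)\in\Lambda_2$ for some $i\in\Lambda_1$, tracks descendent sets, and invokes the basic simplicity of $A/I$ (via Proposition \ref{BasicSimpleQuotient} and an external result, \cite[Corollary 4.10]{CSV1}) to force $\sigma(\Lambda_2)=\Lambda_1$ and hence a splitting $A=I\oplus J$. You instead argue purely inside the lattice of basic ideals: if $I'=\langle e_{\sigma(1)},\dots,e_{\sigma(n/2)}\rangle$ is an $(n/2)$-basic ideal distinct from $I$, then $I+I'$ is a basic ideal of dimension $n-\dim(I\cap I')>n/2$, so maximality forces $I+I'=A$ and $I\cap I'=0$, whence $II'\subseteq I\cap I'=0$ and $A=I\oplus I'$ is reducible. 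This is self-contained (no descendents, no quotient algebra, no citation of \cite{CSV1}), and it proves the uniqueness of the maximal $(n/2)$-basic ideal directly rather than extracting it afterwards as the paper does in Corollary \ref{UnicidadMaximal}. The paper's route, in exchange, yields the extra structural information that $D(i)=\Lambda_2$ for the quotient, which it reuses elsewhere. Both arguments rest on the same two prerequisites you correctly flag: the uniqueness of the natural basis up to permutation and scaling for perfect algebras (Lemma \ref{acuarius} / Proposition \ref{BatidoDeFresasConLeche}(iv)), and the equivalence between the block-triangular form of $M_{B^\sigma}$ and $\mathrm{span}(e_{\sigma(1)},\dots,e_{\sigma(s)})$ being an ideal.
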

\begin{proof}
 Write $M_B=(\omega_{ji})$, where $\omega_{ji}\in K$.

(i) and (ii). Assume first $s=1$. It is trivial that $A$ has to be irreducible because otherwise there will be a maximal $t$-basic ideal of dimension $1  < t < n-1$, which is not the case. It is immediate to see that the number of zeros of $W$ is invariant. Now we see that the number of zeros in $Y$ is invariant, which consequently would imply that the number of zeros in $U$ is also invariant by (i) in Proposition \ref{BatidoDeFresasConLeche}.

Assume that we may change $e_1$ to $e_i$, for some $i\in \{2, \dots, n\}$. Then $e_i^2=\omega_{ii}e_i$ and this would imply that $\{e_1, e_i\}$ generates a $2$-basic ideal, which is not possible as $I$ is a maximal $1$-basic ideal. This means that any other change is given by a permutation $\sigma\in S_n$ such that $\sigma (e_1)=e_1$. Therefore, the first row of the matrix
$I_\sigma M_B$ is just  $  \begin{pmatrix}
\omega_{11} & \omega_{1\sigma(2)} &  \dots &  \omega_{1\sigma(n)}\\
\end{pmatrix}$,
which has the same number of zeros that the first row of the matrix $M_B$, which coincides with the number of zeros of $U$.

(iii) and (iv). Assume $s=n/2$ and $A$ irreducible.
%
%
%
%
Note that $Y$ is the structure matrix of the evolution algebra $A/I$ relative to the basis
$\{\overline{e_{(n/2)+1}}, \dots, \overline{e_n}\}$. By Proposition \ref{BasicSimpleQuotient}, the evolution algebra $A/I$ is perfect and basic simple.
Decompose $\{1, \dots, n\}= \Lambda_1 \sqcup \Lambda_2$, where $\Lambda_1:=\{1, \dots, n/2\}$ and $\Lambda_2:=\{(n/2)+1, \dots, n\}$.
We are going to prove that the only bases $B'=\{e_{\sigma(1)}, \dots e_{\sigma(n)}\}$, for $\sigma \in S_n$, such that $M_{B'}=
\begin{pmatrix}
W' & U' \\
0& Y'
\end{pmatrix}$, where $W' \in M_{n/2}(K)$, are those such that $\sigma(\Lambda_i)=\Lambda_i$, for $i=1, 2$. Once this is proved, we will have that the number of zeros in $W$, in $Y$ and in $U$ is invariant.
Indeed, let $\sigma\in S_n$ be such that $\sigma(i) =j$, for some $i\in \Lambda_1$ and $j\in \Lambda_2$. Then, for every $k\in D^1(i)$, necessarily $\sigma(k)\in \Lambda_1$; this implies $\sigma(D(i))\subseteq \Lambda_1$. Since $A/I$ is basic simple, then $D(i)=\Lambda_2$, by \cite[Corollary 4.10]{CSV1}, so $\sigma(\Lambda_2)\subseteq \Lambda_1$, thus $\sigma(\Lambda_2)=\Lambda_1$ (because both have the same number of elements). Taking into account that $B^\sigma:=\{e_{\sigma(l)} \ \vert \ l\in\{1, \dots, n\}\}$
is such that $M_{B^\sigma}$ has the same form as the matrix $M_{B'}$ before, we get that the vector space generated by $\{e_l \ \vert\ l \in \Lambda_2\}$, say $J$, is an ideal of $A$. This implies that $A= I \oplus J$, where $I$ is the ideal of $A$ generated by $\{e_l \ \vert\ l \in \Lambda_1\}$, a contradiction as we are assuming that $A$ is irreducible.

Finally, we see that the result in (ii) is not true when the algebra is not irreducible. For an example, consider the evolution algebra $A$ having a natural basis $B=\{e_1, e_2, e_3, e_4\}$ such that

\begin{center}
$M_B=$\scalebox{0.60}{$
\left(\begin{tabular}{cc|cc}
1 & 1 &  0 & 0\\
-1 & 1& 0 & 0\\
\hline
0 & 0 &  0 & 1\\
0 & 0 &  1& 0\\
\end{tabular}\right)$}.
\end{center}

Then for $B'=\{e_3, e_4, e_1, e_2\}$ we have

\begin{center}
	$M_B'$=\scalebox{0.60}{$
\left(\begin{tabular}{cc|cc}
0 & 1 &  0 & 0\\
1 & 0& 0 & 0\\
\hline
0 & 0 &  1 & 1\\
0 & 0 &  -1& 1\\
\end{tabular}\right),$}
\end{center}

\noindent
and we observe that the number of zeros in the pieces is not invariant.
\end{proof}

\begin{corollary}\label{UnicidadMaximal}
Let $A$ be an $n$-dimensional perfect evolution algebra (for $n\geq 3$), and assume that there exists a maximal $s$-basic ideal $I$, for $s\in \{1, n/2\}$ (being $n$ even in this last case). Then $I$ is the unique maximal basic ideal.
\end{corollary}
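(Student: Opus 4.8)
The plan is to exploit that, for a perfect finite dimensional evolution algebra, the basic ideals do not depend on the chosen natural basis (Lemma~\ref{acuarius} together with Definitions~\ref{defbasic}). So I would fix the basis $B=\{e_1,\dots,e_n\}$ provided by Proposition~\ref{OneDimensional}, for which $I=\langle\{e_1,\dots,e_s\}\rangle$. Relative to $B$, every basic ideal is the coordinate subspace $\langle\{e_k\ \vert\ k\in S\}\rangle=\mathrm{span}\{e_k\ \vert\ k\in S\}$ determined by the set $S\subseteq\{1,\dots,n\}$ of basis vectors it contains (this is precisely the content of the extension property recorded in the Remark after Definitions~\ref{defbasic}), and its dimension equals $|S|$; in particular two basic ideals coincide if and only if their index sets coincide. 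Since $I$ is a maximal $s$-basic ideal there is no proper basic ideal of dimension larger than $s$, and hence every maximal basic ideal has dimension exactly $s$.

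Next I would take an arbitrary maximal basic ideal $J$, write $J=\mathrm{span}\{e_k\ \vert\ k\in S\}$ with $|S|=s$, and assume towards a contradiction that $J\neq I$, that is, $S\neq\{1,\dots,s\}$. The key step is to consider the sum $I+J=\mathrm{span}\{e_k\ \vert\ k\in\{1,\dots,s\}\cup S\}$: being a coordinate subspace that is again an ideal spanned by part of $B$, it is itself a basic ideal whenever it is proper. Because the two index sets are distinct and of equal size $s$, their union has size strictly larger than $s$, and this is exactly what will clash with the maximality of $I$.

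For $s=1$ the union $\{1\}\cup S$ has exactly two elements, and since $n\geq 3$ it is a proper subset of $\{1,\dots,n\}$; hence $I+J$ is a proper basic ideal of dimension $2>1$, contradicting the maximality of $I$, so $J=I$. For $s=n/2$ (where $A$ is irreducible, as in Proposition~\ref{OneDimensional}) there are two possibilities: either $\{1,\dots,n/2\}\cup S$ is a proper subset, in which case $I+J$ is a proper basic ideal of dimension $>n/2$, again contradicting maximality; or $\{1,\dots,n/2\}\cup S=\{1,\dots,n\}$, in which case the inclusion–exclusion identity $|\{1,\dots,n/2\}\cup S|+|\{1,\dots,n/2\}\cap S|=n/2+n/2$ forces $\{1,\dots,n/2\}\cap S=\emptyset$, so $S=\{(n/2)+1,\dots,n\}$ and $A=I\oplus J$ becomes a direct sum of two nonzero ideals, contradicting irreducibility. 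In every case $J=I$, which gives the asserted uniqueness.

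The main obstacle, and the only point that needs real care, is the identification made in the first paragraph: that basic ideals are exactly the coordinate subspaces indexed by (descendant-closed) subsets of $B$, that the dimension of such an ideal is the cardinality of its index set, and that a sum of two such ideals is again one of them. These all follow from the extension property and from the basis-independence of Lemma~\ref{acuarius}, after which the remainder is a short combinatorial dimension count. I would also emphasize that the irreducibility hypothesis is genuinely indispensable in the case $s=n/2$: the reducible algebra exhibited in the proof of Proposition~\ref{OneDimensional}(iv) possesses two distinct maximal $(n/2)$-basic ideals, so uniqueness can hold only under the standing irreducibility assumption inherited from that proposition.
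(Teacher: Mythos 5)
Your argument is correct. For $s=1$ it coincides with the paper's proof: two distinct maximal $1$-basic ideals would sum to a $2$-basic ideal, contradicting maximality. For $s=n/2$ the paper instead quotes the permutation statement established inside the proof of Proposition~\ref{OneDimensional} (that any $\sigma$ producing the same block form must satisfy $\sigma(\Lambda_1)=\Lambda_1$), which in turn rests on descendants and the basic simplicity of $A/I$; you replace that citation by extending the same ``sum of coordinate ideals'' dichotomy: either $I+J$ is a proper basic ideal of dimension $>n/2$ (killed by maximality) or the index sets are complementary and $A=I\oplus J$ (killed by irreducibility). This buys a unified, self-contained and more elementary treatment of both cases, at the cost of having to justify explicitly that basic ideals relative to a fixed natural basis are coordinate subspaces closed under sums and intersections --- which you do, via the extension property and Lemma~\ref{acuarius}. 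Your remark that irreducibility is genuinely needed for $s=n/2$ (and is automatic for $s=1$ by Proposition~\ref{OneDimensional}(i)) is apt, since the corollary's statement does not make that hypothesis explicit even though the paper's own proof silently imports it from Proposition~\ref{OneDimensional}.
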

\begin{proof}
For $s=1$, if $A$ has two maximal 1-basic ideals, then the sum of both gives a 2-basic ideal of $A$, contradicting the maximality of each one.

Now, assume $s=n/2$ and use the same notation as in the proof of Proposition \ref{OneDimensional} (ii). Let $I=\{e_1, \dots, e_{n/2}\}$ be a maximal $n/2$-basic ideal. Assume that $J$ is another maximal $n/2$-basic ideal of $A$. This means that a basis for $J$ is $\{e_{\sigma(1)}, \dots, e_{\sigma(n/2)}\}$, where $\sigma\in S_n$. By the proof of (ii), $\{\sigma(1), \dots, \sigma (n/2)\} = \{1, \dots, n/2\}$. This shows  $I=J$.
\end{proof}

\begin{remark}
\rm
\begin{enumerate}[\rm (i)]
\item The integer $s$  in Proposition \ref{OneDimensional} cannot be $n-1$. For an example, consider $n=4$, $s=3$ and take $A$ as in Examples \ref{EjemplosDeTodo} (i).
\item The integer $s$ in  Proposition \ref{OneDimensional} cannot be 0 if $A$ is not perfect (i.e. if $A$ is not perfect and $0$ is a maximal ideal, then the number of zeros is not an invariant). For an example, consider the evolution algebra $A$ having a natural basis $B=\{e_1, e_2, e_3\}$ and product given by 	$M_B$=\scalebox{0.60}{$\begin{pmatrix}
0 & 1 &  1 \\
1 & 0& 0 \\
0 & 1 &  1 \\
\end{pmatrix}$}. Then for $B'=\{e_1, e_2+e_3, -e_2+e_3\}$ we have
$M_B'$=\scalebox{0.60}{$\begin{pmatrix}
0 & 2 &  2 \\
1/2 & 1& 1 \\
-1/2 & 1 &  1 \\
\end{pmatrix}$}. Note that while $M_B$ has four zeros, $M_{B'}$ has only one.
\end{enumerate}
\end{remark}

\begin{example}\label{JabonDeMuge}
\rm
The evolution algebras having structure matrices \
$M_B$:=\scalebox{0.60}{$\begin{pmatrix}
1 & 1 &  0 & 0\\
0 & 1& 0 & 1\\
0 & 0 &  1 & 0\\
0 & 0 &  0& 1\\
\end{pmatrix}$} \ and\
$M_{B'}$:=\scalebox{0.60}{$\begin{pmatrix}
1 & 0 &  0 & 0\\
0 & 1& 0 & 1\\
0 & 0 &  1 & 0\\
0 & 0 &  1& 1\\
\end{pmatrix}$}
are isomorphic. In fact, $I_{(1,2,4,3)} \cdot M= M'.$
Note that the corresponding evolution algebra is reducible: if $B=\{v_1, v_2, v_3, v_4\}$, then the evolution algebra  with basis $B$ can be written as
$A= I \oplus J$, where $I=\langle\{v_1, v_2, v_4\}\rangle$ and $J=\langle \{v_3\} \rangle$.
\end{example}

\begin{example}
\rm
Even if the evolution algebra is  irreducible, the number of zeros in $W, U$ and $Y$ is not, necessarily invariant, as  (ii) in Examples \ref{EjemplosDeTodo}  shows.
\end{example}
\medskip

In order to classify the non-simple four dimensional perfect algebras we will distinguish two cases depending on whether or not the algebra is reducible or not.

Assume first that the evolution algebra is reducible.
Assume we have  $A= I\oplus J$, where $I$ and $J$ are evolution ideals having dimensions 1 and 3, or having both of them dimension 2. When this happens, the classification is achieved by considering the classification of two dimensional evolution algebras \cite{evo9} and the classification of three dimensional evolution algebras (see \cite{CSV2}).

\medskip

Now, assume that the evolution algebra is  irreducible.
We classify the non-simple irreducible perfect four-dimensional evolution algebras $A$ taking into account the dimension of the maximal basic ideal (that exists and is unique by Lemma \ref{maximal}), call it $I$. Since $A$ is non-simple, the dimension of this maximal basic ideal, denote it by $i$, is at least one.

Let $B=\{e_1, e_2, e_3, e_4 \}$ be a natural basis of $A$, write $\{1, 2, 3, 4\}= \Lambda_1 \sqcup \Lambda_2$, where $I$ is the vector space generated by $\{e_j\}_{j \in \Lambda_1}$. There is no loss in generality if we assume that $\Lambda_1= \{1, \dots, i\}$, and let

\begin{equation}\label{inicios}
M_B= \left(
\begin{array}{cc}
W& U\\
0 & Y
\end{array}
\right),
\end{equation}
where $W$ is the structure matrix of $I$ relative to the natural basis $\{e_1, \dots, e_i\}$.
By Proposition \ref{BasicSimpleQuotient}, $A/I$ is a basic simple evolution algebra and $Y$ can be seen as the structure matrix of $A/I$ relative to the natural basis $\{\overline{e_{i+1}}, \dots, \overline{e_n}\}$.


\medskip

Note that in Sections \ref{Case2.1} and \ref{Case2.2}, the maximal basic ideal is unique by Corollary \ref{UnicidadMaximal}. This is not the case in Section \ref{SectionThreeDimensional}.

\section{The maximal basic ideal is one-dimensional}\label{Case2.1}

Assume that  the maximal basic ideal $I$ of $A$ is one-dimensional. Write
\begin{equation}\label{churritos}M_B =\scalebox{0.60}{$\begin{pmatrix}
1 & \omega_{12} &  \omega_{13} &  \omega_{14}\\
0 & \omega_{22} &  \omega_{23} &  \omega_{24}\\
0 & \omega_{32} &  \omega_{33} &  \omega_{34}\\
0 & \omega_{42} &  \omega_{43} &  \omega_{44}\\
\end{pmatrix}$}.
\end{equation}
Note that, in this case,
\begin{equation*}\label{UeY}
U= \scalebox{0.60}{$\begin{pmatrix}
 \omega_{12} &  \omega_{13} &  \omega_{14}
\end{pmatrix}$},
\quad
Y= \scalebox{0.60}{$\begin{pmatrix}
 \omega_{22} &  \omega_{23} &  \omega_{24}\\
\omega_{32} &  \omega_{33} &  \omega_{34}\\
\omega_{42} &  \omega_{43} &  \omega_{44}\\
\end{pmatrix}$}.
\end{equation*}

By Proposition \ref{OneDimensional}, the number of zeros in $U$ and $Y$ is invariant. We will use this fact to classify. Concretely, the cases we are going to consider depends on the number of zeros in $U$.
Notice that $\omega_{12}$, $\omega_{13}$ and $\omega_{14}$ cannot be zero at the same time because otherwise the algebra will be reducible.
\medskip

\subsection{The matrix $U$ has two zero entries.}\label{Case2.1.1}

The possible matrices are of the form:
\medskip

\begin{center}
\scalebox{0.60}{$\begin{pmatrix}
1 & 0 & 0 &  \omega_{14}\\
0 & \omega_{22} &  \omega_{23} &  \omega_{24}\\
0 & \omega_{32} &  \omega_{33} &  \omega_{34}\\
0 & \omega_{42} &  \omega_{43} &  \omega_{44}\\
\end{pmatrix},
\quad
\begin{pmatrix}
1 & 0 &  \omega_{13} & 0\\
0 & \omega_{22} &  \omega_{23} &  \omega_{24}\\
0 & \omega_{32} &  \omega_{33} &  \omega_{34}\\
0 & \omega_{42} &  \omega_{43} &  \omega_{44}\\
\end{pmatrix} ,
\quad
\begin{pmatrix}
1 & \omega_{12} & 0 &  0\\
0 & \omega_{22} &  \omega_{23} &  \omega_{24}\\
0 & \omega_{32} &  \omega_{33} &  \omega_{34}\\
0 & \omega_{42} &  \omega_{43} &  \omega_{44}\\
\end{pmatrix},
$}
\end{center}
\medskip

\noindent
where the entries in the first row are nonzero. It happens that the three of them produce isomorphic evolution algebras and the change of basis matrices are given by $I_{(3,4)}$ (producing the isomorphism between the first and the second) and by $I_{(2,4)}$ (producing the isomorphism between the first and the third). Moreover, by multiplying conveniently one of the vectors in the basis by an scalar, we may assume that the matrix is:

\begin{equation}\label{churritoUno}\scalebox{0.60}{$
\begin{pmatrix}
1 & 1 & 0 &  0\\
0 & \omega_{22} &  \omega_{23} &  \omega_{24}\\
0 & \omega_{32} &  \omega_{33} &  \omega_{34}\\
0 & \omega_{42} &  \omega_{43} &  \omega_{44}\\
\end{pmatrix}$.}
\end{equation}

In order to get an irredundant classification we may keep the first row and the first columns as in \eqref{churritoUno} (use what has been explained above and use Proposition \ref{OneDimensional}). This implies that the only possible change of basis which is allowed is $I_{(3,4)}$. Moreover, the matrix $Y$ corresponds to a basic simple three-dimensional evolution algebra. Since $\vert M_B\vert \neq 0$, $\vert Y \vert \neq 0$ and by Lemma \ref{aguita}, this is equivalent to saying that  $Y$ is the structure matrix of a simple three-dimensional evolution algebra. Therefore, in the tables that follow we have inserted, as matrix $Y$, all the different matrices associated to simple three-dimensional evolution algebras, which appear in the classification in  \cite{CCY, CSV2}.

Now, we explain how do we select the matrices in the first columns using the classification of the three-dimensional evolution algebras. Take as a reference, for example, Table 19 in \cite{CCY} and look at the fifth row (corresponding to the first simple three-dimensional evolution algebras in that table), which is the following:

\begin{center}
\scalebox{0.60}{
\begin{tabular}{|c||c||c||c||c||c|}

\hline
&&&&&\\
&(1,2)&(1,3)&(2,3)&(1,2,3)&(1,3,2)\\
&&&&&\\[-0.2cm]

&&&&&\\[-0.2cm]
\hline
\hline
&&&&&\\[-0.2cm]

${
\begin{pmatrix}
\mu & 0 & 1 \\
1 & 0 & 0 \\
0  & 1 & 0

\end{pmatrix}}
$  &

${
 \begin{pmatrix}
0 & 1 & 0 \\
0 & \mu & 1 \\
1  & 0 & 0

\end{pmatrix}}$
&
${
\begin{pmatrix}
0 & 1 & 0 \\
0 & 0 & 1 \\
1  & 0 & \mu

\end{pmatrix} }$
 &

${
\begin{pmatrix}
\mu & 1 & 0 \\
0 & 0 & 1 \\
1  & 0 & 0
\end{pmatrix}}$
&
${
\begin{pmatrix}
0 & 0 & 1 \\
1 & \mu & 0 \\
0  & 1 & 0
\end{pmatrix}}$
&
${
\begin{pmatrix}
0 & 0 & 1 \\
1 & 0 & 0 \\
0  & 1 & \mu
\end{pmatrix}}
$

 \\
&&&&&\\
\hline
\end{tabular}}
\end{center}

Since the only change of basis matrix producing non-isomorphic four-dimensional evolution algebras is $I_{(3,4)}$, the  matrix $Y$ can be the first one in the table (which is isomorphic to the fourth one in the table), but because of the restriction in the permutations which are allowed, we have to take also the second matrix in the table (which is isomorphic to the last one) and the third one (which is isomorphic to the fifth one).
These are precisely the cases appearing in the second, third and fourth rows in Figure \ref{figuraUno}.

In the following tables, matrices in the same row corresponds to isomorphic evolution algebras. Matrices in different rows are associated to non-isomorphic evolution algebras.

{\begin{figure}[H]
\begin{multicols}{2}
\begin{center}
\scalebox{0.60}{
\right)$
												\\
												
												&\\
												\hline
											\end{tabular}}
											\caption{}\label{tablefour}
										\end{multicols}
									\end{figure}

\subsection{The matrix $U$ has no nonzero entries.}\label{Case2.1.3}

Once we have analyzed which are the change of basis matrices that are allowed we get that
$I_\sigma M $ is of the same form as $M$ whenever $\sigma$ belongs to the group $G:= \{{\rm id}, (3,4), (2,3), (2, 3,4), (2,4,3), (2, 4)\}$.

In order to classify we proceed by inserting as matrices $Y$ in \eqref{inicios} all the different matrices associated to simple three-dimensional evolution algebras which appear in the classification in \cite{CCY, CSV2}.

\begin{multicols}{6}
\begin{center}\scalebox{0.60}{$\left(\begin{tabular}{c|ccc}
1 & 1 &  1 & 1 \\
\hline
0 & 0 & 0 & $\omega_{24}$ \\
0 & $\omega_{32}$ & 0 & 0 \\
0 & 0 & $\omega_{43}$ & 0 \\
\end{tabular}\right)$}
\end{center}

\columnbreak

\begin{center}\scalebox{0.60}{$\left(\begin{tabular}{c|ccc}
1 & 1 &  1 & 1 \\
\hline
0 & $\omega_{22}$ & 0 & $\omega_{24}$ \\
0 & $\omega_{32}$ & 0 & 0 \\
0 & 0 & $\omega_{43}$ & 0 \\
\end{tabular}\right)$}
\end{center}

\columnbreak

\begin{center}\scalebox{0.60}{$\left(\begin{tabular}{c|ccc}
1 & 1 &  1 & 1 \\
\hline
0 & 0 & $\omega_{23}$ & $\omega_{24}$ \\
0 & $\omega_{32}$ & 0 & 0 \\
0 & 0 & $\omega_{43}$ & 0 \\
\end{tabular}\right)$}
\end{center}
\columnbreak

\begin{center}\scalebox{0.60}
{$\left(\begin{tabular}{c|ccc}
1 & 1 &  1 & 1 \\
\hline
0 & 0 & $\omega_{23}$ & 0 \\
0 & $\omega_{32}$ & 0 & $\omega_{34}$ \\
0 & $\omega_{42}$ & 0 & $\omega_{44}$ \\
\end{tabular}\right)$}
\end{center}

\columnbreak

\begin{center}\scalebox{0.60}{$\left(\begin{tabular}{c|ccc}
1 & 1 &  1 & 1 \\
\hline
0 & 0 & $\omega_{23}$ & 0 \\
0 & $\omega_{32}$ & 0 & $\omega_{34}$ \\
0 & 0 & $\omega_{43}$ & $\omega_{44}$ \\
\end{tabular}\right)$}
\end{center}

\columnbreak

\begin{center}\scalebox{0.60}{$\left(\begin{tabular}{c|ccc}
1 & 1 &  1 & 1 \\
\hline
0 & $\omega_{22}$ & $\omega_{23}$ & $\omega_{24}$ \\
0 & $\omega_{32}$ & 0 & 0 \\
0 & 0 & $\omega_{43}$ & 0 \\
\end{tabular}\right)$}
\end{center}

\end{multicols}

\begin{multicols}{6}

\begin{center}\scalebox{0.60}{$\left(\begin{tabular}{c|ccc}
1 & 1 &  1 & 1 \\
\hline
0 & $\omega_{22}$ & 0 & $\omega_{24}$ \\
0 & $\omega_{32}$ & $\omega_{33}$ & 0 \\
0 & 0 & $\omega_{43}$ & 0 \\
\end{tabular}\right)$}
\end{center}

\columnbreak

\begin{center}
\scalebox{0.60}
{$\left(\begin{tabular}{c|ccc}
1 & 1 &  1 & 1 \\
\hline
0 & $\omega_{22}$ & 0 & $\omega_{24}$ \\
0 & $\omega_{32}$ & 0 & 0 \\
0 & $\omega_{42}$ & $\omega_{43}$ & 0 \\
\end{tabular}\right)$}
\end{center}

\columnbreak

\begin{center}\scalebox{0.60}{$\left(\begin{tabular}{c|ccc}
1 & 1 &  1 & 1 \\
\hline
0 & 0 & $\omega_{23}$ & $\omega_{24}$ \\
0 & $\omega_{32}$ & 0 & $\omega_{34}$ \\
0 & 0 & $\omega_{43}$ & 0 \\
\end{tabular}\right)$}
\end{center}


\columnbreak

\begin{center}\scalebox{0.60}{$\left(\begin{tabular}{c|ccc}
1 & 1 &  1 & 1 \\
\hline
0 & $\omega_{22}$ & $\omega_{23}$ & 0 \\
0 & 0 & $\omega_{33}$ & $\omega_{34}$ \\
0 & $\omega_{42}$ & 0 & $\omega_{44}$ \\
\end{tabular}\right)$}
\end{center}

\columnbreak

\begin{center}\scalebox{0.60}{$\left(\begin{tabular}{c|ccc}
1 & 1 &  1 & 1 \\
\hline
0 & $\omega_{22}$ & $\omega_{23}$ & $\omega_{24}$ \\
0 & $\omega_{32}$ & 0 & 0 \\
0 & $\omega_{42}$ & 0 & $\omega_{44}$ \\
\end{tabular}\right)$}
\end{center}

\columnbreak

\begin{center}\scalebox{0.60}{$\left(\begin{tabular}{c|ccc}
1 & 1 &  1 & 1 \\
\hline
0 & $\omega_{22}$ & $\omega_{23}$ & $\omega_{24}$ \\
0 & $\omega_{32}$ & 0 & 0 \\
0 & 0 & $\omega_{43}$ & $\omega_{44}$ \\
\end{tabular}\right)$}
\end{center}

\end{multicols}

\begin{multicols}{6}
\begin{center}\scalebox{0.60}{$\left(\begin{tabular}{c|ccc}
1 & 1 &  1 & 1 \\
\hline
0 & 0 & $\omega_{23}$ & $\omega_{24}$ \\
0 & $\omega_{32}$ & $\omega_{33}$ & 0 \\
0 & $\omega_{42}$ & 0 & $\omega_{44}$ \\
\end{tabular}\right)$}
\end{center}

\columnbreak

\begin{center}\scalebox{0.60}{$\left(\begin{tabular}{c|ccc}
1 & 1 &  1 & 1 \\
\hline
0 & 0 & $\omega_{23}$ & $\omega_{24}$ \\
0 & $\omega_{32}$ & $\omega_{33}$ & 0 \\
0 & 0 & $\omega_{43}$ & $\omega_{44}$ \\
\end{tabular}\right)$}
\end{center}

\columnbreak

\begin{center}\scalebox{0.60}{$\left(\begin{tabular}{c|ccc}
1 & 1 &  1 & 1 \\
\hline
0 & 0 & $\omega_{23}$ & $\omega_{24}$ \\
0 & $\omega_{32}$ & 0 & $\omega_{34}$ \\
0 & $\omega_{42}$ & 0 & $\omega_{44}$ \\
\end{tabular}\right)$}
\end{center}

\columnbreak

\begin{center}\scalebox{0.60}{$\left(\begin{tabular}{c|ccc}
1 & 1 &  1 & 1 \\
\hline
0 & 0 & $\omega_{23}$ & $\omega_{24}$ \\
0 & $\omega_{32}$ & 0 & 0 \\
0 & $\omega_{42}$ & $\omega_{43}$ & $\omega_{44}$ \\
\end{tabular}\right)$}
\end{center}

\columnbreak

\begin{center}\scalebox{0.60}{$\left(\begin{tabular}{c|ccc}
1 & 1 &  1 & 1 \\
\hline
0 & $\omega_{22}$ & $\omega_{23}$ & $\omega_{24}$ \\
0 & $\omega_{32}$ & $\omega_{33}$ & 0 \\
0 & 0 & $\omega_{43}$ & 0 \\
\end{tabular}\right)$}
\end{center}

\columnbreak

\begin{center}\scalebox{0.60}{$\left(\begin{tabular}{c|ccc}
1 & 1 &  1 & 1 \\
\hline
0 & $\omega_{22}$ & $\omega_{23}$ & $\omega_{24}$ \\
0 & $\omega_{32}$ & 0 & 0 \\
0 & $\omega_{42}$ & $\omega_{43}$ & 0 \\
\end{tabular}\right)$}
\end{center}

\end{multicols}

\begin{multicols}{6}

\begin{center}\scalebox{0.60}{$\left(\begin{tabular}{c|ccc}
1 & 1 &  1 & 1 \\
\hline
0 & 0 & $\omega_{23}$ & $\omega_{24}$ \\
0 & $\omega_{32}$ & 0 & $\omega_{34}$ \\
0 & $\omega_{42}$ & $\omega_{43}$ & 0 \\
\end{tabular}\right)$}
\end{center}

\columnbreak

\begin{center}\scalebox{0.60}{$\left(\begin{tabular}{c|ccc}
1 & 1 &  1 & 1 \\
\hline
0 & $\omega_{22}$ & $\omega_{23}$ & $\omega_{24}$ \\
0 & $\omega_{32}$ & $\omega_{33}$ & 0 \\
0 & $\omega_{42}$ & 0 & $\omega_{44}$ \\
\end{tabular}\right)$}
\end{center}

\columnbreak

\begin{center}\scalebox{0.60}{$\left(\begin{tabular}{c|ccc}
1 & 1 &  1 & 1 \\
\hline
0 & $\omega_{22}$ & $\omega_{23}$ & $\omega_{24}$ \\
0 & $\omega_{32}$ & $\omega_{33}$ & 0 \\
0 & 0 & $\omega_{43}$ & $\omega_{44}$ \\
\end{tabular}\right)$}
\end{center}
\columnbreak

\begin{center}\scalebox{0.60}{$\left(\begin{tabular}{c|ccc}
1 & 1 &  1 & 1 \\
\hline
0 & $\omega_{22}$ & $\omega_{23}$ & $\omega_{24}$ \\
0 & $\omega_{32}$ & 0 & 0 \\
0 & $\omega_{42}$ & $\omega_{43}$ & $\omega_{44}$ \\
\end{tabular}\right)$}
\end{center}

\columnbreak

\begin{center}\scalebox{0.60}{$\left(\begin{tabular}{c|ccc}
1 & 1 &  1 & 1 \\
\hline
0 & $\omega_{22}$ & $\omega_{23}$ & $\omega_{24}$ \\
0 & $\omega_{32}$ & 0 & $\omega_{34}$ \\
0 & $\omega_{42}$ & 0 & $\omega_{44}$ \\
\end{tabular}\right)$}
\end{center}

\columnbreak

\begin{center}\scalebox{0.60}{$\left(\begin{tabular}{c|ccc}
1 & 1 &  1 & 1 \\
\hline
0 & 0 & $\omega_{23}$ & $\omega_{24}$ \\
0 & $\omega_{32}$ & $\omega_{33}$ & $\omega_{34}$ \\
0 & $\omega_{42}$ & 0 & $\omega_{44}$ \\
\end{tabular}\right)$}
\end{center}

\end{multicols}

\begin{multicols}{6}

\begin{center}\scalebox{0.60}{$\left(\begin{tabular}{c|ccc}
1 & 1 &  1 & 1 \\
\hline
0 & 0 & $\omega_{23}$ & $\omega_{24}$ \\
0 & $\omega_{32}$ & 0 & $\omega_{34}$ \\
0 & $\omega_{42}$ & $\omega_{43}$ & $\omega_{44}$ \\
\end{tabular}\right)$}
\end{center}

\columnbreak


\begin{center}\scalebox{0.60}{$\left(\begin{tabular}{c|ccc}
1 & 1 &  1 & 1 \\
\hline
0 & $\omega_{22}$ & $\omega_{23}$ & $\omega_{24}$ \\
0 & $\omega_{32}$ & $\omega_{33}$ & $\omega_{34}$ \\
0 & $\omega_{42}$ & 0 & $\omega_{44}$ \\
\end{tabular}\right)$}
\end{center}

\columnbreak

\begin{center}\scalebox{0.60}{$\left(\begin{tabular}{c|ccc}
1 & 1 &  1 & 1 \\
\hline
0 & $\omega_{22}$ & $\omega_{23}$ & $\omega_{24}$ \\
0 & $\omega_{32}$ & $\omega_{33}$ & $\omega_{34}$ \\
0 & $\omega_{42}$ & $\omega_{43}$ & 0 \\
\end{tabular}\right)$}
\end{center}

\columnbreak

\begin{center}\scalebox{0.60}{$\left(\begin{tabular}{c|ccc}
1 & 1 &  1 & 1 \\
\hline
0 & $\omega_{22}$ & $\omega_{23}$ & $\omega_{24}$ \\
0 & $\omega_{32}$ & $\omega_{33}$ & $\omega_{34}$ \\
0 & $\omega_{42}$ & $\omega_{43}$ & $\omega_{44}$ \\
\end{tabular}\right)$}
\end{center}

\end{multicols}

\section{The maximal basic ideal is two-dimensional}\label{Case2.2}
%
%
In this case, the structure matrix is as follows:
\begin{equation}\label{churritosConLeche}
\scalebox{0.60}{$\begin{pmatrix}
\omega_{11} & \omega_{12} &  \omega_{13} &  \omega_{14}\\
\omega_{21}  & \omega_{22} &  \omega_{23} &  \omega_{24}\\
0 & 0 &  \omega_{33} &  \omega_{34}\\
0 & 0 &  \omega_{43} &  \omega_{44}\\
\end{pmatrix}$}.
\end{equation}

Notice that $\omega_{13}$, $\omega_{14}$, $\omega_{23}$  and $\omega_{24}$ cannot be zero at the same time because otherwise the algebra will be reducible.

The general form for the matrices we are classifying is
$\begin{pmatrix}
W&  U\\
 0&  Y\\
 \end{pmatrix}
 $,
 where $W=\begin{pmatrix}
\omega_{11} & \omega_{12} \\
\omega_{21}  & \omega_{22} \\
\end{pmatrix}$,
$U=\begin{pmatrix}
\omega_{13} & \omega_{14} \\
\omega_{23}  & \omega_{24} \\
\end{pmatrix}$
 and
 $Y=\begin{pmatrix}
\omega_{33} & \omega_{34} \\
\omega_{43}  & \omega_{44} \\
\end{pmatrix}$.
\medskip

The results that we use in order to classify are the following. By Proposition \ref{OneDimensional}, the number of zeros in $W$, $U$ and $Y$ is invariant. On the other hand, $\vert W \vert , \vert Y \vert \neq 0$ because $M_B\neq 0$ and using \cite[Theorem 2]{Silvester}. Moreover, $A/I$ is basic simple by Proposition \ref{BasicSimpleQuotient}, where $I$ is the maximal 2-basic ideal of $A$.

 First, we fix $W$, which, by the classification of perfect two-dimensional evolution algebras (\cite[Theorem 3.3. (III)]{CCY}), must be one matrix in the following set:
$$ \Gamma:= \left\{
$}.
\end{equation*}

As Example \ref{EjemplosDeTodo} (i) shows, the number of zeros of the matrices $U$ and $W$ is not necessarily invariant. This happens only when the conditions in Proposition \ref{NoInvariantes} are satisfied.

Therefore,  in order to classify the evolution algebras which appear, we need a different property. To find it,  we pay attention to the change of basis matrices (which are permutations of the basis up to scalars, as explained before) which are allowed. Looking at the matrix $M_B$ it seems natural to think that the  possible permutations are those which fix $e_4$. This is the case, as we show in Proposition \ref{ObservCuatro}

\begin{proposition}\label{NoInvariantes}
Let $A$ be a perfect $n$-dimensional evolution algebra having an $(n-1)$-basic ideal $I$ and let  $B=\{e_1, \dots, e_n\}$ be a natural basis. Assume (there is no loss in generality in doing this) that $I=\langle \{e_1, \dots, e_{n-1}\}\rangle$. Let

$$
M_B=\left(\begin{tabular}{c|c}
$W$
&
$U$
\\
\hline
$\begin{matrix}
0& \cdots & 0
\end{matrix}$
&
 $\omega_{nn}$
\end{tabular}\right),$$
where $W \in M_3(\K)$, $U\in M_{3\times1}(\K)$ and $\omega_{nn}\in \K^\times$.
Then, there exists a natural basis $B'$ of $A$ with
$$
M_{B'}=\left(\begin{tabular}{c|c}
$W'$
&
$U'$
\\
\hline
$\begin{matrix}
0& \cdots & 0
\end{matrix}$
&
 $\omega'_{nn}$
\end{tabular}\right),$$
where $W' \in M_3(\K)$, $U'\in M_{3\times1}(\K)$ and  $\omega'_{nn}\in \K^\times$,
such that the number of zeros in $W$ and in $W'$ does not coincide  (and, consequently the number of zeros in $U$ and in $U'$ is different) if and only if there exists $i\in \{1, \dots, n-1\}$ such that $i\notin D^1(j)$ for any $j\in \{1, \dots, n\}\setminus \{i\}$ and $\vert D^1(i)\vert \neq \vert D^1(n)\vert$.
\end{proposition}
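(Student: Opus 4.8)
The plan is to convert both conditions into statements about the column weights of $M_B$ (equivalently the out-degrees $|D^1(\cdot)|$), and to exploit that a change of natural basis preserves the total number of zero entries of the structure matrix. \emph{First} I would reduce to permutations: since $A$ is perfect, Lemma~\ref{acuarius} (that is, \cite[Theorem 4.4]{EL}) guarantees that every natural basis of $A$ is, up to nonzero scalars, a reordering $\{e_{\sigma(1)},\dots,e_{\sigma(n)}\}$ of $B$; rescaling a basis vector never turns a zero entry into a nonzero one or conversely, so for the purpose of counting zeros I may assume $B'=\{e_{\sigma(1)},\dots,e_{\sigma(n)}\}$ and $(M_{B'})_{ab}=(M_B)_{\sigma(a)\sigma(b)}$. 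In particular the last column of $M_{B'}$ is the $\sigma(n)$-th column of $M_B$ with its rows permuted, hence it carries exactly $|D^1(\sigma(n))|$ nonzero entries.

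\emph{Second} I would record the conservation identity. By Proposition~\ref{BatidoDeFresasConLeche}(i) the number of zeros of $M_B$ and of $M_{B'}$ coincide. In the block form both matrices have the same $(n-1)$ forced zeros in the bottom row and a nonzero bottom-right corner (justified below); therefore $\mathrm{zeros}(W)+\mathrm{zeros}(U)=\mathrm{zeros}(W')+\mathrm{zeros}(U')$. This already yields the parenthetical ``consequently'': the number of zeros in $W$ changes if and only if the number of zeros in $U$ changes. It thus suffices to track $\mathrm{zeros}(U)$, and since the corner entry $\omega_{nn}$ is nonzero we get $\mathrm{zeros}(U)=n-|D^1(n)|$, while $\mathrm{zeros}(U')=n-|D^1(\sigma(n))|$.

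\emph{Third} I would characterise the admissible $\sigma$, i.e. those for which $M_{B'}$ is again in block form. Writing $i=\sigma(n)$, the last row of $M_{B'}$ vanishes off the diagonal precisely when $(M_B)_{i,c}=0$ for every $c\neq i$, that is $i\notin D^1(j)$ for all $j\neq i$; equivalently $\langle\{e_k:k\neq i\}\rangle$ is an $(n-1)$-basic ideal. Moreover perfectness forces $\omega_{ii}\neq 0$: row $i$ of $M_B$ is then zero except possibly at the diagonal, so expanding $\det M_B\neq 0$ along it gives $\omega_{ii}\neq 0$, which is the nonzero corner used above. Combining the three steps, for such a $B'$ one has $\mathrm{zeros}(W')\neq\mathrm{zeros}(W)$ iff $\mathrm{zeros}(U')\neq\mathrm{zeros}(U)$ iff $|D^1(i)|\neq|D^1(n)|$. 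For the forward implication, any witnessing $B'$ produces $i=\sigma(n)$ with the required properties (note $i\neq n$, otherwise $\sigma$ fixes the last slot and no count changes). For the converse, given such an $i$ the transposition $\sigma=(i\,n)$ is admissible and alters the counts, giving the desired basis $B'$.

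\emph{The main obstacle} I anticipate is not any single computation but the careful bookkeeping that lets the $W$-statement be deduced from the $U$-statement: one must verify that the only zeros unaccounted for by $W$ and $U$ are the structurally forced ones (the $(n-1)$ zeros of the bottom row) and that the bottom-right corner is nonzero in \emph{both} bases, which is exactly where perfectness and the ideal condition $i\notin D^1(j)$ enter. Once this is secured, everything collapses to the identity $\mathrm{zeros}(U)=n-|D^1(n)|$ and the characterisation of admissible permutations through the value $\sigma(n)$.
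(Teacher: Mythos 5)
Your proof is correct and follows essentially the same route as the paper's: reduce to permutations of the basis via \cite[Theorem 4.4]{EL}, characterise the admissible permutations by the vanishing of the new last row (which is exactly the condition $i\notin D^1(j)$ for all $j\neq i$, with $i=\sigma(n)$), and detect the change in the zero count by comparing $\vert D^1(i)\vert$ with $\vert D^1(n)\vert$, using the transposition $(i\,n)$ for the converse. Your conservation identity $\mathrm{zeros}(W)+\mathrm{zeros}(U)=\mathrm{zeros}(W')+\mathrm{zeros}(U')$ and the formula $\mathrm{zeros}(U)=n-\vert D^1(n)\vert$ merely make explicit two steps the paper leaves implicit, including the observation that perfectness forces the new corner entry $\omega_{ii}$ to be nonzero.
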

\begin{proof}
Assume first that the number of zeros is not invariant for some basis $B'$ such that
$$
M_{B'}=\left(\begin{tabular}{c|c}
$W'$
&
$U'$
\\
\hline
$\begin{matrix}
0& \cdots & 0
\end{matrix}$
&
 $\omega'_{nn}$
\end{tabular}\right).$$
 Since (up to scalars) the only changes of basis are the permutation matrices (by \cite[Theorem 4.4]{EL}), there exists $i\in  \{1, \dots, n-1\}$ and $\sigma \in S_n$ such that $\sigma(i)=n$,
 and $B'=\{e_{\sigma(i)} \ \vert \ i\in \{1, \dots, n\}\}$. Taking into account the last row in $M_{B'}$ we obtain $i\notin D^1(j)$ for any $j\in \{1, \dots, n\}\setminus \{i\}$. Since we are assuming that the number of zeros in $W$ and in $W'$ is different, necessarily $\vert D^1(i)\vert \neq \vert D^1(n)\vert$.

 Now we prove the reverse implication.
Assume that there exists $i$ satisfying the conditions in the statement. Take $\sigma\in S_n$ such that $\sigma(i)=n$ and $\sigma(n)=i$ and define $B'=\{e_{\sigma(i)} \ \vert \ i\in \{1, \dots, n\}\}$. Then
$$
M_{B'}=\left(\begin{tabular}{c|c}
$W'$
&
$U'$
\\
\hline
$\begin{matrix}
0& \cdots & 0
\end{matrix}$
&
 $\omega'_{nn}$
\end{tabular}\right),$$
where $\omega'_{nn}\in \K^\times$ and the number of zeros in $W'$ is not the same as the number of zeros in $W$.
\end{proof}

\begin{remark}\label{Cambio4}
\rm
 In the proof of Proposition \ref{NoInvariantes} and considering $n=4$, it is shown that  the condition $i\notin D^1(j)$ for any $j\in \{1, \dots, 4\}\setminus \{i\}$ implies that the change of basis obtained from $B$ by
puting $e_i$ instead of $e_4$ produces an structure matrix having the same form as \eqref{horchata}. This is the reason to classify taking into account this condition.
\end{remark}

Take $n=4$ in Proposition \ref{NoInvariantes}, and use Remark \ref{Cambio4} to get the following.

\begin{proposition}\label{ObservCuatro}
Let $A$ be a perfect $4$-dimensional evolution algebra having a $3$-basic ideal $I$ and let  $B=\{e_1, \dots, e_4\}$ be a natural basis. Assume (there is no loss in generality in doing this) that $I=\langle \{e_1, \dots, e_{3}\}\rangle$. Let

$$
M_B=\left(\begin{tabular}{c|c}
$W$
&
$U$
\\
\hline
$\begin{matrix}
0& \cdots & 0
\end{matrix}$
&
 $\omega_{44}$
\end{tabular}\right),$$
where $W \in M_3(\K)$, $U\in M_{3\times1}(\K)$ and $\omega_{44}\in \K^\times$.

 Then, there exists $i\in \{1, \dots, 3\}$ such that $i\notin D^1(j)$ for any $j\in \{1, \dots, 4\}\setminus \{i\}$  if and only if there exist  $k, l \in \{1, 2, 3\} \setminus \{ i\}$, with $k\neq l$, such that ${\rm lin}\{e_k, e_l\}$ is a two-dimensional evolution ideal of $A$ and ${\rm lin}\{e_k, e_l, e_4\}$ is a three dimensional evolution ideal.
 \end{proposition}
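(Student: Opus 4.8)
The plan is to reduce the statement to elementary bookkeeping with the descendant sets $D^1(\cdot)$, read directly off the columns of $M_B$ in \eqref{horchata}. Recall that $D^1(m)=\{k \ \vert\ \omega_{km}\neq 0\}$ is the set of nonzero rows of the $m$-th column, so the hypothesis ``$i\notin D^1(j)$ for every $j\in\{1,2,3,4\}\setminus\{i\}$'' says exactly that the $i$-th row of $M_B$ vanishes off the diagonal. Moreover, since $A$ is commutative and $B$ is a natural basis, for a subset $S\subseteq B$ the subspace ${\rm lin}(S)$ is an ideal precisely when $e^2\in {\rm lin}(S)$ for every $e\in S$; concretely, ${\rm lin}\{e_k,e_l\}$ is a $2$-dimensional evolution ideal iff $D^1(k),D^1(l)\subseteq\{k,l\}$, and ${\rm lin}\{e_k,e_l,e_4\}$ is a $3$-dimensional evolution ideal iff $D^1(k),D^1(l),D^1(4)\subseteq\{k,l,4\}$.

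The decisive observation is that $\{1,2,3\}\setminus\{i\}$ has exactly two elements, so in both directions $\{k,l\}$ is forced to be this two-element set and there is nothing to choose. I also record, from $I={\rm lin}\{e_1,e_2,e_3\}$ being an ideal (equivalently, from the vanishing of the first three entries of the last row of $M_B$ in \eqref{horchata}), that $D^1(1),D^1(2),D^1(3)\subseteq\{1,2,3\}$.

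For the forward implication I would take $\{k,l\}=\{1,2,3\}\setminus\{i\}$. The hypothesis gives $i\notin D^1(k)$ and $i\notin D^1(l)$, and combined with $D^1(k),D^1(l)\subseteq\{1,2,3\}$ this yields $D^1(k),D^1(l)\subseteq\{k,l\}$, so ${\rm lin}\{e_k,e_l\}$ is an ideal. For the larger set, the hypothesis also gives $i\notin D^1(4)$, hence $D^1(4)\subseteq\{k,l,4\}$, and together with the previous containments ${\rm lin}\{e_k,e_l,e_4\}$ is an ideal. For the converse, given such $k,l$ I set $i$ to be the unique element of $\{1,2,3\}\setminus\{k,l\}$; the ideal conditions give $i\notin D^1(k)$, $i\notin D^1(l)$ (from $D^1(k),D^1(l)\subseteq\{k,l\}$) and $i\notin D^1(4)$ (from $D^1(4)\subseteq\{k,l,4\}$), which is exactly the required condition since $\{k,l,4\}=\{1,2,3,4\}\setminus\{i\}$.

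There is no serious obstacle; the content lies entirely in correctly translating ``evolution ideal'' into containments of descendant sets and in noticing the forced choice of $\{k,l\}$. The single point that needs a word of care is that an ideal of the form ${\rm lin}(S)$ with $S\subseteq B$ is automatically an \emph{evolution} ideal, because $S$ is itself a natural basis of it (the products $e_ke_l$ vanish for $k\neq l$); this is why checking closure of the squares suffices and no further structural verification is needed. Alternatively, one could deduce the statement from Proposition \ref{NoInvariantes} together with Remark \ref{Cambio4}, but the direct descendant-set computation above is shorter and self-contained.
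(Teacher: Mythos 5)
Your proof is correct. Every step checks out: the translation of the hypothesis ``$i\notin D^1(j)$ for all $j\neq i$'' into the vanishing of the $i$-th row of $M_B$ off the diagonal, the characterization of ${\rm lin}(S)$ for $S\subseteq B$ being an (automatically evolution) ideal via $D^1$-containments, the use of $D^1(1),D^1(2),D^1(3)\subseteq\{1,2,3\}$ coming from $I$ being an ideal, and the observation that $\{k,l\}$ is forced to equal $\{1,2,3\}\setminus\{i\}$ in both directions. The converse direction is even slightly over-determined (the three-dimensional ideal condition together with $D^1(k),D^1(l)\subseteq\{1,2,3\}$ already yields the two-dimensional one), but that does no harm.

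The route is genuinely different from the paper's, in the sense that the paper gives no explicit proof at all: it obtains Proposition \ref{ObservCuatro} by invoking Proposition \ref{NoInvariantes} with $n=4$ together with Remark \ref{Cambio4}, i.e.\ by passing through the analysis of which permutations of the basis preserve the block form \eqref{horchata} and when the number of zeros in $W$ fails to be invariant. That derivation situates the statement inside the machinery the paper actually needs for the classification (the allowed changes of basis and Condition (3,2,3)), but it leaves the equivalence itself to the reader. Your direct bookkeeping with descendant sets is shorter, self-contained, and proves exactly the stated biconditional without appealing to the change-of-basis results; the only thing it does not provide is the link, which the paper wants, between this condition and the existence of a second $3$-basic ideal $J$ with $I\cap J$ a $2$-basic ideal --- though that link is immediate from your conclusion, since $J={\rm lin}\{e_k,e_l,e_4\}$ and $I\cap J={\rm lin}\{e_k,e_l\}$.
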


When  the conditions in Proposition \ref{ObservCuatro} are satisfied, then the evolution algebra $A$, which contains a 3-basic ideal $I$, also contains another 3-basic ideal, say $J$, such that $I\cap J$ is a 2-basic ideal of both, $I$ and $J$. We will use this fact for the classification.

\begin{remark}
\rm
Note that for perfect evolution algebras, the intersection of basic ideals is again a basic ideal.
\end{remark}

In this section we are assuming that $A$  has a 3-basic ideal $I$. We will distinguish whether or not $I$ has a 2-basic ideal. In the affirmative, we will distinguish whether or not this 2-basic ideal is contained in another 3-basic ideal.

\begin{definition}
\rm
An evolution algebra $A$ will be said to satisfy \emph{Condition} (3,2,3) if $A$ has two different 3-basic ideals $I$ and $J$ such that $I\cap J$ is a 2-basic ideal of $I$ and also of $J$.
\end{definition}

\subsection{$A$ has a  3-basic ideal which has a 2-basic ideal.}
We may assume that $\{e_1, e_2\}$ is a 2-basic ideal of $I$, where $I$ is a 3-basic ideal having basis $\{e_1, e_2, e_3\}$.

In the tables that follow we will analyze which types of $U$ and $W$ produce a structure matrix such that $A$ is irreducible, where $W$ is the matrix of $I$. These matrices appear in the classification of three-dimensional evolution algebras (see \cite{CCY, CSV2}). Whenever we obtain an irreducible evolution algebra $A$, we  add $\ast$ when  $A$ does not satisfy Condition (3,2,3). We remark again that this means that $e_4$ cannot be changed to any other element in the natural basis in order to produce another matrix having the same form.
\smallskip

\begin{figure}[H]
\begin{center}
\scalebox{0.6}{
\begin{tabular}{|c||c||c||c||c||c||c||c|}
\hline
&&&&&&&\\
$W$ & $U= \tiny{\begin{pmatrix}1 \\0 \\0\end{pmatrix}}$ & $U= \tiny{\begin{pmatrix}0 \\1 \\0\end{pmatrix}}$& $U=\tiny{\begin{pmatrix}0\\0 \\1\end{pmatrix}}$& $U= \tiny{\begin{pmatrix}1 \\1 \\0\end{pmatrix}}$& $U=\tiny{\begin{pmatrix}1 \\0 \\1\end{pmatrix}}$& $U=\tiny{\begin{pmatrix}0 \\1 \\1\end{pmatrix}}$ & $U=\tiny{\begin{pmatrix}1 \\1 \\1\end{pmatrix}}$ \\
&&&&&&&\\
\hline
\hline
&&&&&&&\\[-0.2cm]
$\begin{pmatrix}
\omega_{11} & 0 &  0 \\
0 & \omega_{22} & 0 \\
0 & 0 & \omega_{33} \\
\end{pmatrix}$
 &
 Reducible
 &
 Reducible
 &
 Reducible
 &
 Reducible
 &
 Reducible
 &
 Reducible
 &
 Irreducible*
 \\
&&&&&&&\\[-0.2cm]
\hline
\hline
&&&&&&&\\[-0.2cm]
$\begin{pmatrix}
0 & \omega_{12} &  0 \\
\omega_{21} & 0 & 0 \\
0 & 0 & \omega_{33} \\
\end{pmatrix}$
 &
 Reducible
 &
 Reducible
 &
 Reducible
 &
 Reducible
 &
 Irreducible*
 &
 Irreducible*
 &
  Irreducible*
 \\
&&&&&& &\\[-0.2cm]
\hline
\hline
&&&&&&&\\[-0.2cm]
$\begin{pmatrix}
\omega_{11} & \omega_{12} & 0 \\
0 & \omega_{22} & 0 \\
0 & 0 & \omega_{33} \\
\end{pmatrix}$
 &
 Reducible
 &
 Reducible
 &
 Reducible
 &
 Reducible
 &
 Irreducible
 &
 Irreducible*
  &
 Irreducible*
\\
&&&&&&&\\[-0.2cm]
\hline
\hline
&&&&&&&\\[-0.2cm]
$\begin{pmatrix}
\omega_{11} & 0 &  0 \\
\omega_{21} & \omega_{22} & 0 \\
0 & 0 & \omega_{33} \\
\end{pmatrix}$
 &
 Reducible
 &
 Reducible
 &
 Reducible
 &
 Reducible
 &
 Irreducible*
 &
 Irreducible
 &
 Irreducible*
  \\
&&&&&&&\\[-0.2cm]
\hline
\hline
&&&&&&&\\[-0.2cm]
$\begin{pmatrix}
 \omega_{11} & 0 &  \omega_{13} \\
0 & \omega_{22} & 0 \\
0 & 0 & \omega_{33} \\
\end{pmatrix}$
 &
 Reducible
 &
 Reducible
 &
 Reducible
 &
 Irreducible
 &
 Reducible
 &
 Irreducible*
  &
 Irreducible*
 \\
&&&&&&&\\[-0.2cm]
\hline
\hline
&&&&&&&\\[-0.2cm]
$\begin{pmatrix}
\omega_{11} & 0 &  0 \\
0 & \omega_{22} & \omega_{23} \\
0 & 0 & \omega_{33} \\
\end{pmatrix}$
 &
Reducible
 &
 Reducible
 &
 Reducible
 &
 Irreducible
 &
 Irreducible*
 &
 Reducible
  &
 Irreducible*
 \\
&&&&&&&\\[-0.2cm]
\hline
\hline
&&&&&&&\\[-0.2cm]
$\begin{pmatrix}
\omega_{11} & \omega_{12} &  0 \\
\omega_{21} & 0 & 0 \\
0 & 0 & \omega_{33} \\
\end{pmatrix}$
 &
 Reducible
 &
 Reducible
 &
 Reducible
 &
 Reducible
 &
 Irreducible*
 &
 Irreducible*
  &
 Irreducible*
 \\
&&&&&&&\\[-0.2cm]
\hline
\hline
&&&&&&&\\[-0.2cm]
$\begin{pmatrix}
0 & \omega_{12} &  0 \\
\omega_{21} & \omega_{22}  & 0 \\
0 & 0 & \omega_{33} \\
\end{pmatrix}$
 &
Reducible
 &
Reducible
 &
Reducible
 &
Reducible
 &
 Irreducible*
 &
 Irreducible*
 &
 Irreducible* \\
&&&&&&&\\[-0.2cm]
\hline
\hline
&&&&&&&\\[-0.2cm]
$\begin{pmatrix}
0 &  \omega_{12} & \omega_{13} \\
\omega_{21} & 0  & 0 \\
0 & 0 & \omega_{33} \\
\end{pmatrix}$
 &
 Irreducible
 &
 Irreducible
 &
 Irreducible*
 &
 Irreducible
 &
 Irreducible*
 &
 Irreducible*
 &
 Irreducible*
 \\
&&&&&&&\\[-0.2cm]
\hline
\hline
&&&&&&&\\[-0.2cm]
$\begin{pmatrix}
0 & \omega_{12} &  0 \\
\omega_{21} & 0 & \omega_{23} \\
0 & 0 & \omega_{33} \\
\end{pmatrix}$
 &
 Irreducible
 &
 Irreducible
 &
 Irreducible*
 &
 Irreducible
 &
 Irreducible*
 &
 Irreducible*
  &
 Irreducible*
 \\
&&&&&&&\\[-0.2cm]
\hline
\hline
&&&&&&&\\[-0.2cm]
$\begin{pmatrix}
\omega_{11} & \omega_{12} &  \omega_{13} \\
0 & \omega_{22} & 0 \\
0 & 0 & \omega_{33} \\
\end{pmatrix}$
 &
 Irreducible
 &
 Irreducible
 &
 Irreducible
 &
 Irreducible
 &
 Irreducible
 &
 Irreducible*
 &
 Irreducible*
  \\
&&&&&&&\\[-0.2cm]
\hline
\hline
&&&&&&&\\[-0.2cm]
$\begin{pmatrix}
\omega_{11} & 0 &  0 \\
\omega_{21} & \omega_{22} & \omega_{23} \\
0 & 0 & \omega_{33} \\
\end{pmatrix}$
 &
 Irreducible
 &
 Irreducible
 &
 Irreducible
 &
 Irreducible
 &
 Irreducible*
 &
 Irreducible
  &
 Irreducible*
 \\
&&&&&&&\\[-0.2cm]
\hline
\hline
&&&&&&&\\[-0.2cm]
$\begin{pmatrix}
\omega_{11} & \omega_{12} &  0 \\
\omega_{21} & \omega_{22} & 0 \\
0 & 0 & \omega_{33} \\
\end{pmatrix}$
 &
 Reducible
 &
 Reducible
 &
 Reducible
 &
 Reducible
 &
 Irreducible*
 &
 Irreducible*
 &
 Irreducible*
  \\
&&&&&&&\\[-0.2cm]
\hline
\hline
&&&&&&&\\[-0.2cm]
$\begin{pmatrix}
\omega_{11} & \omega_{12} &  0 \\
0 & \omega_{22} & \omega_{23} \\
0 & 0 & \omega_{33} \\
\end{pmatrix}$
 &
 Irreducible
 &
 Irreducible
 &
 Irreducible*
 &
 Irreducible
 &
 Irreducible*
 &
 Irreducible*
 &
 Irreducible* \\
&&&&&&&\\
\hline
\end{tabular}}
\end{center}
\caption{$A$ has a $3$-basic ideal which has a $2$-basic ideal}\label{AntesDeThreeWithTwo}
\end{figure}

\begin{figure}[H]
\begin{center}
\scalebox{0.6}{
\begin{tabular}{|c||c||c||c||c||c||c||c|}
\hline
&&&&&&&\\
$W$ & $U= \tiny{\begin{pmatrix}1 \\0 \\0\end{pmatrix}}$ & $U= \tiny{\begin{pmatrix}0 \\1 \\0\end{pmatrix}}$& $U=\tiny{\begin{pmatrix}0\\0 \\1\end{pmatrix}}$& $U= \tiny{\begin{pmatrix}1 \\1 \\0\end{pmatrix}}$& $U=\tiny{\begin{pmatrix}1 \\0 \\1\end{pmatrix}}$& $U=\tiny{\begin{pmatrix}0 \\1 \\1\end{pmatrix}}$  & $U=\tiny{\begin{pmatrix}1 \\1 \\1\end{pmatrix}}$ \\
&&&&&&&\\[-0.2cm]
\hline
\hline
&&&&&&&\\[-0.2cm]
$\begin{pmatrix}
\omega_{11} & 0 &  \omega_{13} \\
\omega_{21} & \omega_{22} & 0 \\
0 & 0 & \omega_{33} \\
\end{pmatrix}$
 &
 Irreducible
 &
 Irreducible
 &
 Irreducible*
 &
 Irreducible
 &
 Irreducible*
 &
 Irreducible*
 &
 Irreducible*

 \\
&&&&&&&\\[-0.2cm]
\hline
\hline
&&&&&&&\\[-0.2cm]
$\begin{pmatrix}
\omega_{11} & 0 &  \omega_{13} \\
0 & \omega_{22} & \omega_{23} \\
0 & 0 & \omega_{33} \\
\end{pmatrix}$
 &
 Irreducible
 &
 Irreducible
 &
 Irreducible*
 &
 Irreducible
 &
 Irreducible*
 &
 Irreducible*
 &
 Irreducible*

 \\
&&&&&&&\\[-0.2cm]
\hline
\hline
&&&&&&&\\[-0.2cm]
$\begin{pmatrix}
\omega_{11} & \omega_{12} &  \omega_{13} \\
 \omega_{21} & 0 & 0 \\
0 & 0 & \omega_{33} \\
\end{pmatrix}$
 &
 Irreducible
 &
 Irreducible
 &
 Irreducible*
 &
 Irreducible
 &
 Irreducible*
 &
 Irreducible*
 &
 Irreducible*

  \\
&&&&&&&\\[-0.2cm]
\hline
\hline
&&&&&&&\\[-0.2cm]
$\begin{pmatrix}
0 & \omega_{12} &  0 \\
\omega_{21} & \omega_{22} & \omega_{23} \\
0 & 0 & \omega_{33} \\
\end{pmatrix}$
&
 Irreducible
 &
 Irreducible
 &
 Irreducible*
 &
 Irreducible
 &
 Irreducible*
 &
 Irreducible*
 &
 Irreducible*

 \\
&&&&&&&\\[-0.2cm]
\hline
\hline
&&&&&&&\\[-0.2cm]
$\begin{pmatrix}
\omega_{11} & \omega_{12} & 0 \\
\omega_{21} & 0 & \omega_{23}\\
0 & 0 & \omega_{33} \\
\end{pmatrix}$
&
 Irreducible
 &
 Irreducible
 &
 Irreducible*
 &
 Irreducible
 &
 Irreducible*
 &
 Irreducible*
 &
 Irreducible*

 \\
&&&&&&&\\[-0.2cm]
\hline
\hline
&&&&&&&\\[-0.2cm]
$\begin{pmatrix}
0 &  \omega_{12} &  \omega_{13} \\
\omega_{21} & \omega_{22} & 0 \\
0 & 0 & \omega_{33} \\
\end{pmatrix}$
&
 Irreducible
 &
 Irreducible
 &
 Irreducible*
 &
 Irreducible
 &
 Irreducible*
 &
 Irreducible*
 &
 Irreducible*

 \\
&&&&&&&\\[-0.2cm]
\hline
\hline
&&&&&&&\\[-0.2cm]
$\begin{pmatrix}
0 &  \omega_{12} &  \omega_{13} \\
\omega_{21} & 0  & \omega_{23}\\
0 & 0 & \omega_{33} \\
\end{pmatrix}$
&
 Irreducible
 &
 Irreducible
 &
 Irreducible*
 &
 Irreducible
 &
 Irreducible*
 &
 Irreducible*
 &
 Irreducible*

 \\
&&&&&&&\\[-0.2cm]
\hline
\hline
&&&&&&&\\[-0.2cm]
$\begin{pmatrix}
\omega_{11} & \omega_{12} &  \omega_{13} \\
\omega_{21} & \omega_{22} & 0 \\
0 & 0 & \omega_{33} \\
\end{pmatrix}$
&
 Irreducible
 &
 Irreducible
 &
 Irreducible*
 &
 Irreducible
 &
 Irreducible*
 &
 Irreducible*
 &
 Irreducible*

 \\
&&&&&&&\\[-0.2cm]
\hline
\hline
&&&&&&&\\[-0.2cm]
$\begin{pmatrix}
\omega_{11} & \omega_{12} &  0 \\
\omega_{21} & \omega_{22} & \omega_{23} \\
0 & 0 & \omega_{33} \\
\end{pmatrix}$
&
 Irreducible
 &
 Irreducible
 &
 Irreducible*
 &
 Irreducible
 &
 Irreducible*
 &
 Irreducible*
 &
 Irreducible*

 \\
&&&&&&&\\[-0.2cm]
\hline
\hline
&&&&&&&\\[-0.2cm]
$\begin{pmatrix}
\omega_{11} & \omega_{12} &  \omega_{13} \\
0 & \omega_{22} & \omega_{23} \\
0 & 0 & \omega_{33} \\
\end{pmatrix}$
&
 Irreducible
 &
 Irreducible
 &
 Irreducible*
 &
 Irreducible
 &
 Irreducible*
 &
 Irreducible*
 &
 Irreducible*

 \\
&&&&&&&\\[-0.2cm]
\hline
\hline
&&&&&&&\\[-0.2cm]
$\begin{pmatrix}
\omega_{11} & 0 &  \omega_{13} \\
\omega_{21} & \omega_{22} & \omega_{23} \\
0 & 0 & \omega_{33} \\
\end{pmatrix}$
&
 Irreducible
 &
 Irreducible
 &
 Irreducible*
 &
 Irreducible
 &
 Irreducible*
 &
 Irreducible*
 &
 Irreducible*

 \\
&&&&&&&\\[-0.2cm]
\hline
\hline
&&&&&&&\\[-0.2cm]
$\begin{pmatrix}
\omega_{11} & \omega_{12} &  \omega_{13} \\
\omega_{21} & 0 & \omega_{23} \\
0 & 0 & \omega_{33} \\
\end{pmatrix}$
&
 Irreducible
 &
 Irreducible
 &
 Irreducible*
 &
 Irreducible
 &
 Irreducible*
 &
 Irreducible*
 &
 Irreducible*

 \\
&&&&&&&\\[-0.2cm]
\hline
\hline
&&&&&&&\\[-0.2cm]
$\begin{pmatrix}
0 & \omega_{12} &  \omega_{13} \\
\omega_{21} & \omega_{22} & \omega_{23} \\
0 & 0 & \omega_{33} \\
\end{pmatrix}$
&
 Irreducible
 &
 Irreducible
 &
 Irreducible*
 &
 Irreducible
 &
 Irreducible*
 &
 Irreducible*
 &
 Irreducible*

 \\
&&&&&&&\\[-0.2cm]
\hline
\hline
&&&&&&&\\[-0.2cm]
$\begin{pmatrix}
\omega_{11} & \omega_{12} &  \omega_{13} \\
\omega_{21} & \omega_{22} & \omega_{23} \\
0 & 0 & \omega_{33} \\
\end{pmatrix}$
&
 Irreducible
 &
 Irreducible
 &
 Irreducible*
 &
 Irreducible
 &
 Irreducible*
 &
 Irreducible*
 &
 Irreducible*

 \\
&&&&&&&\\
\hline
\end{tabular}}
\end{center}
\caption{$A$ has a 3-basic ideal which has a 2-basic ideal}\label{ThreeWithTwo}
\end{figure}

\smallskip

Now, we analyze whether or not $A$ satisfies Condition (3,2,3).
\smallskip

\subsubsection{$A$ satisfies Condition (3,2,3).}
\medskip

 The following tables describe the different types of mutually non-isomorphic evolution algebras. In order to differentiate the  non-isomorphic algebras we use the following invariants: the number of zeros in the matrix; the number of zeros in the main diagonal (these are invariant as shown in Proposition \ref{BatidoDeFresasConLeche} (i) and (ii)); the indegree and the outdegree of each vertex in the associated graph (which is an invariant as, up to scalars, the only change of basis matrices are the permutations and any permutation preserves these degrees); the graph (since the algebra is perfect, by \cite[Corollary 4.5]{EL}, the graph is an invariant).

 The mutually non-isomorphic evolution algebras are 24. In the table below the types are shown. Two matrices having the same type correspond to isomorphic evolution algebras.

\begin{center}
\begin{figure}[H]
\scalebox{0.60}{
\right)$
 &
 6
 &
 0
 &
 (2,4)(2,4)(3,1)(3,1)
 &

 \scalebox{0.6}{
$\xymatrix{
  \bullet_{v_1} \ar@/^{-6pt}/[rr] \uloopr{} & &  \bullet_{v_2} \ar@/^{-9pt}/[ll] \uloopr{}    \\
 \bullet_{v_4}     \ar[u] \ar[urr] \dloopr{}  & &  \bullet_{v_3} \dloopr{} \ar[u] \ar[ull]  \\
 }$}
 &
 23
 \\
&&&&&\\
\hline
\end{tabular}}
\caption{$A$ satisfies Condition (3,2,3)}
\end{figure}
\end{center}

\subsubsection{The algebra $A$ does not satisfy Condition (3,2,3).}
\medskip
We explain which is our procedure in order to obtain all the mutually non isomorphic evolution algebras in this subsection.

We are classifying matrices of the form
$$
 (\dag)\quad
 \left(\begin{tabular}{c|c}
$W$
&
$U$
\\
\hline
$\begin{matrix}
0& \cdots & 0
\end{matrix}$
&
 1
\end{tabular}\right),$$
where $W$and $U$ are as in Figures \ref{AntesDeThreeWithTwo} and \ref{ThreeWithTwo}and the matrix $(\dag)$ corresponds to an ``Irreducible*" case, so, we start with  93 cases (37 from Figures \ref{AntesDeThreeWithTwo} and 56 from Figures \ref{ThreeWithTwo}).

Observe that two matrices  of the form
$$
\left(\begin{tabular}{c|c}
$W$
&
$U$
\\
\hline
$\begin{matrix}
0& \cdots & 0
\end{matrix}$
&
 1
\end{tabular}\right),
\quad
\left(\begin{tabular}{c|c}
$W'$
&
$U'$
\\
\hline
$\begin{matrix}
0& \cdots & 0
\end{matrix}$
&
 1
\end{tabular}\right)
$$
correspond to isomorphic evolution algebras if and only if $W$ and $W'$ correspond to three-dimensional isomorphic evolution algebras. The reason is that the possible change of basis matrices are those permutation matrices keeping  the last column invariant.

Choose one matrix as in $(\dag)$ and locate $W$ in \cite[Tables 18-22]{CCY} (which is the classification of three dimensional perfect evolution algebras having a 2-basic ideal generated by the first and the second vectors in the given basis). If there is another $W'$ in the mentioned tables such that the three-dimensional evolution algebras with associated matrices $W$ and $W'$ are isomorphic and such that both algebras have a 2-basic ideal generated by the first and the second vectors in the given basis, then the four-dimensional evolution algebras having these $W$ and $W'$, with the corresponding $U$ and $U'$, are isomorphic.

After doing this, we group the evolution algebras provided in Figures \ref{AntesDeThreeWithTwo} and \ref{ThreeWithTwo} by isomorphisms.

The tables below show which matrices included among the 93 are isomorphic. Two matrices in the same row correspond to isomorphic evolution algebras. Matrices in different rows are not isomorphic.

\newpage

{\begin{figure}[H]
\begin{multicols}{2}
\begin{center}
\scalebox{0.60}{
$}
\end{equation}
In order to get an irredundant classification we may keep the last row and the last column as in \eqref{bocadillito1} (use what has been explained above and use Proposition \ref{OneDimensional}). This implies that the only possible change of basis which is allowed is $I_{(1,2)}$. Moreover, the matrix $W$ corresponds to a three-dimensional evolution algebra not having a 2-basic ideal.
Therefore, in the tables that follow we have inserted, as matrix $W$, all the different matrices associated to three-dimensional evolution algebras not having a 2-basic ideal (these appear in the classification in  \cite{CCY, CSV2}).

In the tables that follow, matrices in the same row corresponds to isomorphic evolution algebras. Matrices in different rows are associated to non-isomorphic evolution algebras.

\medskip

\begin{figure}[H]
\begin{multicols}{2}
\centering
\scalebox{0.6}{
\right)$
\\
&\\
\hline
\end{tabular}}
\caption{}\label{quinientos}
\end{multicols}
\end{figure}

\subsubsection{The matrix $U$ has no nonzero entries.}
\medskip

If we analyze which are the change of basis matrices that are allowed we get that
$I_\sigma M $ is, up to scalars, of the same form as $M$ if $\sigma$ belongs to the group $G:= \{{\rm id}, (2,3), (1,2), (1, 2,3), (1,3,2), (1, 3)\}$, which consists of all permutations in $S_4$ which leave $4$  invariant.

In order to classify we proceed by choosing the matrices $Y$ as all the different matrices associated to  three-dimensional evolution algebras not having a 2-basic ideal which appear in the classification in \cite{CCY, CSV2}. The mutually non-isomorphic matrices are listed below.

\begin{multicols}{6}
\begin{center}
\scalebox{0.60}{
$\left(\begin{tabular}{ccc|c}
0 & 0 & $\omega_{13}$ &  1\\
$\omega_{21}$ & 0 & 0 & 1 \\
0 & $\omega_{32}$ & 0 & 1 \\
\hline
0 & 0 & 0 & 1
\end{tabular}\right)$}
\end{center}

\columnbreak

\begin{center}
	\scalebox{0.60}{
$\left(\begin{tabular}{ccc|c}
0 & $\omega_{12}$ & 0 &  1\\
$\omega_{21}$ & 0 & 0 & 1 \\
$\omega_{31}$ & 0 & $\omega_{33}$ & 1 \\
\hline
0 & 0 & 0 & 1
\end{tabular}\right)$}
\end{center}

\columnbreak

\begin{center}
	\scalebox{0.60}{
$\left(\begin{tabular}{ccc|c}
$\omega_{11}$ & 0 & $\omega_{13}$ &  1\\
$\omega_{21}$ & 0 & 0 & 1 \\
0 & $\omega_{32}$ & 0 & 1 \\
\hline
0 & 0 & 0 & 1
\end{tabular}\right)$}
\end{center}

\columnbreak

\begin{center}
	\scalebox{0.60}{
$\left(\begin{tabular}{ccc|c}
0 & $\omega_{12}$ & $\omega_{13}$ &  1\\
$\omega_{21}$ & 0 & 0 & 1 \\
0 & $\omega_{32}$ & 0 & 1 \\
\hline
0 & 0 & 0 & 1
\end{tabular}\right)$}
\end{center}

\columnbreak

\begin{center}
	\scalebox{0.60}{
$\left(\begin{tabular}{ccc|c}
$\omega_{11}$ & $\omega_{12}$ & 0 &  1\\
$\omega_{21}$ & 0 & 0 & 1 \\
$\omega_{31}$ & 0 & $\omega_{33}$ & 1 \\
\hline
0 & 0 & 0 & 1
\end{tabular}\right)$}
\end{center}

\columnbreak

\begin{center}
	\scalebox{0.60}{
$\left(\begin{tabular}{ccc|c}
$\omega_{11}$ & $\omega_{12}$ & 0 &  1\\
$\omega_{21}$ & 0 & 0 & 1 \\
0 & $\omega_{32}$ & $\omega_{33}$ & 1 \\
\hline
0 & 0 & 0 & 1
\end{tabular}\right)$}
\end{center}

\end{multicols}

\begin{multicols}{6}

\begin{center}
	\scalebox{0.60}{
$\left(\begin{tabular}{ccc|c}
0 & $\omega_{12}$ & 0 &  1\\
$\omega_{21}$ & 0 & $\omega_{23}$ & 1 \\
$\omega_{31}$ & 0 & $\omega_{33}$ & 1 \\
\hline
0 & 0 & 0 & 1
\end{tabular}\right)$}
\end{center}

\columnbreak

\begin{center}
	\scalebox{0.60}{
$\left(\begin{tabular}{ccc|c}
0 & $\omega_{12}$ & 0 &  1\\
$\omega_{21}$ & 0 & $\omega_{23}$ & 1 \\
0 & $\omega_{32}$ & $\omega_{33}$ & 1 \\
\hline
0 & 0 & 0 & 1
\end{tabular}\right)$}
\end{center}

\columnbreak

\begin{center}
	\scalebox{0.60}{
$\left(\begin{tabular}{ccc|c}
0 & $\omega_{12}$ & 0 &  1\\
$\omega_{21}$ & 0 & 0 & 1 \\
$\omega_{31}$ & $\omega_{32}$ & $\omega_{33}$ & 1 \\
\hline
0 & 0 & 0 & 1
\end{tabular}\right)$}
\end{center}

\columnbreak

\begin{center}
	\scalebox{0.60}{
$\left(\begin{tabular}{ccc|c}
$\omega_{11}$ & $\omega_{12}$ & $\omega_{13}$ &  1\\
$\omega_{21}$ & 0 & 0 & 1 \\
0 & $\omega_{32}$ & 0 & 1 \\
\hline
0 & 0 & 0 & 1
\end{tabular}\right)$}
\end{center}

\columnbreak

\begin{center}
	\scalebox{0.60}{
$\left(\begin{tabular}{ccc|c}
$\omega_{11}$ & 0 & $\omega_{13}$ &  1\\
$\omega_{21}$ & $\omega_{22}$ & 0 & 1 \\
0 & $\omega_{32}$ & 0 & 1 \\
\hline
0 & 0 & 0 & 1
\end{tabular}\right)$}
\end{center}

\columnbreak

\begin{center}
	\scalebox{0.60}{
$\left(\begin{tabular}{ccc|c}
$\omega_{11}$ & 0 & $\omega_{13}$ &  1\\
$\omega_{21}$ & 0 & 0 & 1 \\
$\omega_{31}$ & $\omega_{32}$ & 0 & 1 \\
\hline
0 & 0 & 0 & 1
\end{tabular}\right)$}
\end{center}
\end{multicols}

\begin{multicols}{6}
\begin{center}
	\scalebox{0.60}{
$\left(\begin{tabular}{ccc|c}
0 & $\omega_{12}$ & $\omega_{13}$ &  1\\
$\omega_{21}$ & 0 & $\omega_{23}$ & 1 \\
0 & $\omega_{32}$ & 0 & 1 \\
\hline
0 & 0 & 0 & 1
\end{tabular}\right)$}
\end{center}

\columnbreak
\begin{center}
	\scalebox{0.60}{
$\left(\begin{tabular}{ccc|c}
$\omega_{11}$ & $\omega_{12}$ & 0 &  1\\
0 & $\omega_{22}$ & $\omega_{23}$ & 1 \\
$\omega_{31}$ & 0 & $\omega_{33}$ & 1 \\
\hline
0 & 0 & 0 & 1
\end{tabular}\right)$}
\end{center}

\columnbreak

\begin{center}
	\scalebox{0.60}{
$\left(\begin{tabular}{ccc|c}
$\omega_{11}$ & $\omega_{12}$ & 0 &  1\\
$\omega_{21}$ & $\omega_{22}$ & 0 & 1 \\
$\omega_{31}$ & 0 & $\omega_{33}$ & 1 \\
\hline
0 & 0 & 0 & 1
\end{tabular}\right)$}
\end{center}

\columnbreak

\begin{center}
	\scalebox{0.60}{
$\left(\begin{tabular}{ccc|c}
$\omega_{11}$ & $\omega_{12}$ & $\omega_{13}$ &  1\\
$\omega_{21}$ & 0 & 0 & 1 \\
$\omega_{31}$ & 0 & $\omega_{33}$ & 1 \\
\hline
0 & 0 & 0 & 1
\end{tabular}\right)$}
\end{center}

\columnbreak

\begin{center}
	\scalebox{0.60}{
$\left(\begin{tabular}{ccc|c}
$\omega_{11}$ & $\omega_{12}$ & $\omega_{13}$ &  1\\
$\omega_{21}$ & 0 & 0 & 1 \\
0 & $\omega_{32}$ & $\omega_{33}$ & 1 \\
\hline
0 & 0 & 0 & 1
\end{tabular}\right)$}
\end{center}

\columnbreak

\begin{center}
	\scalebox{0.60}{
$\left(\begin{tabular}{ccc|c}
0 & $\omega_{12}$ & $\omega_{13}$ &  1\\
$\omega_{21}$ & $\omega_{22}$ & 0 & 1 \\
$\omega_{31}$ & 0 & $\omega_{33}$ & 1 \\
\hline
0 & 0 & 0 & 1
\end{tabular}\right)$}
\end{center}

\end{multicols}

\begin{multicols}{6}

\begin{center}
	\scalebox{0.60}{
$\left(\begin{tabular}{ccc|c}
0 & $\omega_{12}$ & $\omega_{13}$ &  1\\
$\omega_{21}$ & $\omega_{22}$ & 0 & 1 \\
0 & $\omega_{32}$ & $\omega_{33}$ & 1 \\
\hline
0 & 0 & 0 & 1
\end{tabular}\right)$}
\end{center}

\columnbreak

\begin{center}
	\scalebox{0.60}{
$\left(\begin{tabular}{ccc|c}
0 & $\omega_{12}$ & $\omega_{13}$ &  1\\
$\omega_{21}$ & 0 & $\omega_{23}$ & 1 \\
$\omega_{31}$ & 0 & $\omega_{33}$ & 1 \\
\hline
0 & 0 & 0 & 1
\end{tabular}\right)$}
\end{center}

\columnbreak

\begin{center}
	\scalebox{0.60}{
$\left(\begin{tabular}{ccc|c}
0 & $\omega_{12}$ & $\omega_{13}$ &  1\\
$\omega_{21}$ & 0 & 0 & 1 \\
$\omega_{31}$ & $\omega_{32}$ & $\omega_{33}$ & 1 \\
\hline
0 & 0 & 0 & 1
\end{tabular}\right)$}
\end{center}

\columnbreak

\begin{center}
	\scalebox{0.60}{
$\left(\begin{tabular}{ccc|c}
0 & $\omega_{12}$ & 0 &  1\\
$\omega_{21}$ & $\omega_{22}$ & 0 & 1 \\
$\omega_{31}$ & $\omega_{32}$ & $\omega_{33}$ & 1 \\
\hline
0 & 0 & 0 & 1
\end{tabular}\right)$}
\end{center}

\columnbreak

\begin{center}
	\scalebox{0.60}{
$\left(\begin{tabular}{ccc|c}
$\omega_{11}$ & $\omega_{12}$ & $\omega_{13}$ &  1\\
$\omega_{21}$ & $\omega_{22}$ & 0 & 1 \\
0 & $\omega_{32}$ & 0 & 1 \\
\hline
0 & 0 & 0 & 1
\end{tabular}\right)$}
\end{center}

\columnbreak

\begin{center}
	\scalebox{0.60}{
$\left(\begin{tabular}{ccc|c}
$\omega_{11}$ & $\omega_{12}$ & $\omega_{13}$ &  1\\
$\omega_{21}$ & 0 & 0 & 1 \\
$\omega_{31}$ & $\omega_{32}$ & 0 & 1 \\
\hline
0 & 0 & 0 & 1
\end{tabular}\right)$}
\end{center}
\end{multicols}

\begin{multicols}{6}
\begin{center}
	\scalebox{0.60}{
$\left(\begin{tabular}{ccc|c}
0 & $\omega_{12}$ & $\omega_{13}$ &  1\\
$\omega_{21}$ & 0 & $\omega_{23}$ & 1 \\
$\omega_{31}$ & $\omega_{32}$ & 0 & 1 \\
\hline
0 & 0 & 0 & 1
\end{tabular}\right)$}
\end{center}

\columnbreak


\begin{center}
	\scalebox{0.60}{
$\left(\begin{tabular}{ccc|c}
$\omega_{11}$ & $\omega_{12}$ & $\omega_{13}$ &  1\\
$\omega_{21}$ & $\omega_{22}$ & 0 & 1 \\
$\omega_{31}$ & 0 & $\omega_{33}$ & 1 \\
\hline
0 & 0 & 0 & 1
\end{tabular}\right)$}
\end{center}

\columnbreak

\begin{center}
	\scalebox{0.60}{
$\left(\begin{tabular}{ccc|c}
$\omega_{11}$ & $\omega_{12}$ & $\omega_{13}$ &  1\\
$\omega_{21}$ & $\omega_{22}$ & 0 & 1 \\
0 & $\omega_{32}$ & $\omega_{33}$ & 1 \\
\hline
0 & 0 & 0 & 1
\end{tabular}\right)$}
\end{center}

\columnbreak

\begin{center}
	\scalebox{0.60}{
$\left(\begin{tabular}{ccc|c}
$\omega_{11}$ & 0 & $\omega_{13}$ &  1\\
$\omega_{21}$ & $\omega_{22}$ & $\omega_{23}$ & 1 \\
$\omega_{31}$ & 0 & $\omega_{33}$ & 1 \\
\hline
0 & 0 & 0 & 1
\end{tabular}\right)$}
\end{center}

\columnbreak

\begin{center}
	\scalebox{0.60}{
$\left(\begin{tabular}{ccc|c}
$\omega_{11}$ & $\omega_{12}$ & $\omega_{13}$ & 1\\
$\omega_{21}$ & 0 & 0 & 1\\
$\omega_{31}$ & $\omega_{32}$ & $\omega_{33}$ & 1\\
\hline
0 & 0 & 0 & 1\\
\end{tabular}\right)$}
\end{center}

\columnbreak

\begin{center}
	\scalebox{0.60}{
$\left(\begin{tabular}{ccc|c}
$\omega_{11}$ & $\omega_{12}$ & $\omega_{13}$ & 1\\
$\omega_{21}$ & 0 & $\omega_{23}$ & 1\\
$\omega_{31}$ & 0 & $\omega_{33}$ & 1\\
\hline
0 & 0 & 0 & 1\\
\end{tabular}\right)$}
\end{center}

\end{multicols}

\begin{multicols}{6}

\begin{center}
	\scalebox{0.60}{
$\left(\begin{tabular}{ccc|c}
0 & $\omega_{12}$ & $\omega_{13}$ & 1\\
$\omega_{21}$ & $\omega_{22}$ & $\omega_{23}$ & 1\\
$\omega_{31}$ & 0 & $\omega_{33}$ & 1\\
\hline
0 & 0 & 0 & 1\\
\end{tabular}\right)$}
\end{center}

\columnbreak

\begin{center}
	\scalebox{0.60}{
$\left(\begin{tabular}{ccc|c}
0 & $\omega_{12}$ & $\omega_{13}$ & 1\\
$\omega_{21}$ & 0 & $\omega_{23}$ & 1\\
$\omega_{31}$ & $\omega_{32}$ & $\omega_{33}$ & 1\\
\hline
0 & 0 & 0 & 1\\
\end{tabular}\right)$}
\end{center}

\columnbreak


\begin{center}
	\scalebox{0.60}{
$\left(\begin{tabular}{ccc|c}
$\omega_{11}$ & $\omega_{12}$ & $\omega_{13}$ & 1\\
$\omega_{21}$ & $\omega_{22}$ & $\omega_{23}$ & 1\\
$\omega_{31}$ & 0 & $\omega_{32}$ & 1\\
\hline
0 & 0 & 0 & 1\\
\end{tabular}\right)$}
\end{center}

\columnbreak

\begin{center}
	\scalebox{0.60}{
$\left(\begin{tabular}{ccc|c}
$\omega_{11}$ & $\omega_{12}$ & $\omega_{13}$ & 1\\
$\omega_{21}$ & $\omega_{22}$ & $\omega_{23}$ & 1\\
$\omega_{31}$ & $\omega_{32}$ & 0 & 1\\
\hline
0 & 0 & 0 & 1\\
\end{tabular}\right)$}
\end{center}

\columnbreak
\begin{center}
	\scalebox{0.60}{
$\left(\begin{tabular}{ccc|c}
$\omega_{11}$ & $\omega_{12}$ & $\omega_{13}$ & 1\\
$\omega_{21}$ & $\omega_{22}$ & $\omega_{23}$ & 1\\
$\omega_{31}$ & $\omega_{32}$ & $\omega_{33}$ & 1\\
\hline
0 & 0 & 0 & 1\\
\end{tabular}\right)$}
\end{center}

\end{multicols}


\end{document}